\DeclareMathOperator*{\esssup}{ess\,sup}
\DeclareMathOperator*{\essinf}{ess\,inf}
\numberwithin{equation}{section}
\newtheorem{theorem}{Theorem}[section]
\newtheorem{lemma}{Lemma}[section]
\newtheorem{definition}{Definition}[section]
\newtheorem{corollary}{Corollary}[section]
\def\XXint#1#2#3{{\setbox0=\hbox{$#1{#2#3}{\int}$}
     \vcenter{\hbox{$#2#3$}}\kern-.5\wd0}}
\begin{document}

\title{Harnack's inequality for quasilinear elliptic equations with generalized
Orlicz growth
 %Local continuity of solutions to
%singular-degenerate parabolic equations with nonstandard growth
%\thanks{Dedicated to the 80th anniversary of Professor Igor V. Skrypnik.}
}

\author{Maria A. Shan, Igor I. Skrypnik, Mykhailo V. Voitovych %\thanks{The Division of Applied Problems
 %in Contemporary Analysis, Institute of Mathematics of NASU, Kiev, Ukraine}
 }
%\email{iskrypnik@iamm.donbass.com}

 \maketitle

  \begin{abstract}
We prove Harnack's inequality for bounded weak solutions to quasilinear second order elliptic
equations with generalized Orlicz growth conditions.
Our approach covers new cases of variable exponent and $(p, q)$ growth conditions.

\textbf{Keywords:}
Elliptic equations, nonstandard growth, non-log condition, bounded solutions,
Harnack inequality.

\textbf{MSC (2010)}: 35B65, 35D30, 35J60,  49N60.

\end{abstract}

\pagestyle{myheadings} \thispagestyle{plain} \markboth{Maria O.
Shan}{Harnack's inequality for quasilinear elliptic equations with generalized
Orlicz growth}

\section{Introduction and main results}\label{Introduction}

In this paper we are concerned with quasilinear elliptic equations of the form
\begin{equation}\label{gellequation}
{\rm div}\bigg( g(x,|\nabla u|)\frac{\nabla u}{|\nabla u|} \bigg)=0,
\quad x\in \Omega,
\end{equation}
where $\Omega$ is a bounded domain in $\mathbb{R}^{n}$, $n\geqslant2$.

Throughout the paper we suppose that the function
$g(x, {\rm v}): \Omega\times \mathbb{R}_{+}\rightarrow \mathbb{R}_{+}$,
$\mathbb{R}_{+}:=[0,+\infty)$,
satisfies the following assumptions:
\begin{itemize}
\item[(${\rm g}$)]
$g(\cdot, {\rm v})\in L^{1}(\Omega)$ for all ${\rm v}\in \mathbb{R}_{+}$,
$g(x, \cdot)$ is continuous and non-decreasing for almost all $x\in\Omega$,
$\lim\limits_{{\rm v}\rightarrow +0}g(x, {\rm v})=0$ and
$\lim\limits_{{\rm v}\rightarrow +\infty}g(x, {\rm v})=+\infty$;
%$$
%\lim\limits_{{\rm v}\rightarrow +0}g(x, {\rm v})=0 \ \ \text{and} \ \
%\lim\limits_{{\rm v}\rightarrow +\infty}g(x, {\rm v})=+\infty.
%$$
\end{itemize}
%We also assume that
\begin{itemize}
\item[(${\rm g}_{1}$)]
there exist $c_{1}>0$, $q>1$ and $b_{0}\geqslant0$ such that
\begin{equation}\label{gqineq}
\frac{g(x, {\rm w})}{g(x, {\rm v})}\leqslant
c_{1} \left( \frac{{\rm w}}{{\rm v}} \right)^{q-1},
\end{equation}
for all $x\in \Omega$ and for all ${\rm w}\geqslant{\rm v}\geqslant b_{0}$;
\end{itemize}
\begin{itemize}
\item[(${\rm g}_{2}$)]
there exists $p>1$ such that
\begin{equation}\label{gpineq}
\frac{g(x, {\rm w})}{g(x, {\rm v})}\geqslant
\left( \frac{{\rm w}}{{\rm v}} \right)^{p-1},
\end{equation}
for all $x\in \Omega$ and for all ${\rm w}\geqslant{\rm v}>0$;
\end{itemize}
\begin{itemize}
\item[(${\rm g}_{3}$)]
for any $K>0$ and for any ball $B_{8r}(x_{0})\subset\Omega$ there exists $c_{2}(K)>0$
such that
$$
g(x_{1},  {\rm v}/r)\leqslant c_{2}(K)e^{\lambda(r)}g(x_{2},  {\rm v}/r),
$$
for all $x_{1}$, $x_{2}\in B_{r}(x_{0})$ and for all
$r\leqslant {\rm v}\leqslant K$.
Here $\lambda(r):(0, r_{\ast})\rightarrow \mathbb{R}_{+}$
is a continuous, non-increasing function, satisfying
the conditions described below.
\end{itemize}

In addition, it turns out that the following functions
defined on $\Omega\times \mathbb{R}_{+}$ satisfy assumptions
(${\rm g}_{1}$)--(${\rm g}_{3}$):
\begin{equation}\label{examplfnctg12}
g(x, {\rm v}):= {\rm v}^{\,p(x)-1}+{\rm v}^{\,q(x)-1},
\quad
g(x, {\rm v}):={\rm v}^{\,p(x)-1} \big(1+\ln(1+{\rm v}) \big),
\end{equation}
\begin{equation}\label{examplfnctg34}
g(x, {\rm v}):= {\rm v}^{\,p-1}+a(x){\rm v}^{\,q-1}, \
a(x)\geqslant 0, \quad
g(x, {\rm v}):={\rm v}^{\,p-1} \big(1+b(x)\ln(1+{\rm v}) \big), \
b(x)\geqslant 0,
\end{equation}
where the exponents $p$, $q$, $p(\cdot)$, $q(\cdot)$, and the coefficients $a(\cdot)$
and $b(\cdot)$ satisfy the  following conditions:
$1<p< p(x)\leqslant q(x)< q<+\infty$ for all $x\in \Omega$,
\begin{equation}\label{eqSkr1.3}
\begin{aligned}
&|p(x)-p(y)|+|q(x)-q(y)|\leqslant
\frac{\lambda(|x-y|)}
{\big|\ln |x-y|\big|},
%L \left|\ln \left|\ln \frac{\lambda(|z-y|)}{|z-y|}\right| \right|
\quad x,y\in \Omega, \quad x\neq y,
\\
& \text{the function } \frac{\lambda(r)}{|\ln r|}
\text{ is non-decreasing on } (0,r_{\ast})
\text{ for some } r_{\ast}>0, \quad
\lim\limits_{r\rightarrow 0} \frac{\lambda(r)}{|\ln r|}=0,
\hskip 5 mm
\end{aligned}
\end{equation}
\begin{equation}\label{axconditin}
\begin{aligned}
&|a(x)-a(y)|\leqslant A|x-y|^{\alpha}\, e^{\lambda(|x-y|)},
\quad x,y\in \Omega,
\quad x\neq y, \quad A>0, \quad 0<q-p\leqslant\alpha \leqslant 1,
\\
&\text{the function } r^{\alpha}\,e^{\lambda(r)} \text{ is non-decreasing on } (0,r_{\ast})
\text{ for some } r_{\ast}>0, \quad \lim\limits_{r\rightarrow 0}r^{\alpha}\,e^{\lambda(r)}=0,
\end{aligned}
\end{equation}
\begin{equation}\label{bxconditin}
\begin{aligned}
&|b(x)-b(y)|\leqslant  \frac{B\,e^{\lambda(|x-y|)}}{\big|\ln |x-y| \big|},
\quad x,y\in \Omega, \quad x\neq y, \quad B>0,
\\
&\text{the function } \frac{e^{\lambda(r)}}{|\ln r|}\text{ is non-decreasing on } (0,r_{\ast})
\text{ for some } r_{\ast}>0,
\quad \lim\limits_{r\rightarrow 0} \frac{e^{\lambda(r)}}{|\ln r|}=0. \hskip 5 mm
\end{aligned}
\end{equation}

The study of regularity of minima of functionals with non-standard growth
of $(p,q)$-type has been initiated by Zhikov
\cite{ZhikIzv1983, ZhikIzv1986, ZhikJMathPh94, ZhikJMathPh9798, ZhikKozlOlein94},
Marcellini \cite{Marcellini1989, Marcellini1991} and Lieberman \cite{Lieberman91}
and in the last thirty years, the qualitative theory of second order equations with so-called
''log-condition'' (i.e. if $\lambda(r)\leqslant L<+\infty$) has been actively developed
(see, for instance,
\cite{%AcerbiFuscoJDE94, AcerbiMingioneArchRat01, AcerbiMingioneAnSc01, AcerbiMingioneArchRat02,
%AcerbiMingioneJRAngMath05,
Alhutov97, AlhutovKrash04, AlkhSurnApAn19, AlkhSurnTrPetr19,
BarColMing, BarColMingStPt16, BarColMingCalc.Var.18, %BenedMascoloAbsApplAn04,
BurSkr_PotAn,
%ByunOh17, ByunRyuShin18, ChiadoPiatCoscia,
ColMing218, ColMing15, ColMingJFnctAn16,
%ElMarcMas16, ElMarcMasPuraAppl16, ElMarcMasAdvCalc17, GiandiNap13,
HarHastoZ.Anal19, HarHastLee2018,
HarHastToiv17, Mingione2006, VoitNA19}).
These classes of equations have numerous applications in physics and have been
attracted attention for several decades
(see, e.g.,
\cite{AntDiazShm2002_monogr, Ruzicka2000, Weickert}
and references therein).

The case when conditions \eqref{eqSkr1.3}, \eqref{axconditin}, \eqref{bxconditin} hold
differs substantially from the log-case.
To our knowledge there are few results in this direction. Zhikov \cite{ZhikPOMI04}
obtained a generalization of the logarithmic condition which guarantees the density
of smooth functions in Sobolev space $W^{1,p(x)}(\Omega)$. Particularly,
this result holds if $1<p\leqslant p(x)$ and
$$
|p(x)-p(y)|\leqslant L\,
\frac{ \Big|\ln \big|\ln |x-y|\big| \Big|}{\big|\ln |x-y|\big|},
\quad x,y\in\Omega, \quad x\neq y, \quad L<p/n.
$$
In the case when the variable exponent $p(x)$ satisfies the condition
\begin{equation}\label{lnlnlnZhikcond}
|p(x)-p(x_{0})|\leqslant L\,\frac{\ln\ln\ln|x-x_{0}|^{-1}}{\ln|x-x_{0}|^{-1}},
\ \ 0<L<p/(n+1), \ \ x,x_{0}\in\Omega,  \ \ |x-x_{0}|<1/27,
\end{equation}
Alkhutov and Krasheninnikova \cite{AlhutovKrash08}
proved the continuity of solutions to the $p(x)$-Laplace
equation at the point $x_{0}$, and Surnachev \cite{SurnPrepr2018}
established the Harnack inequality for solutions.
The continuity of solutions to the $p(x)$-Laplace equation up to the boundary  were proved
by Alkhutov and Surnachev \cite{AlkhSurnAlgAn19} under the additional condition
\begin{equation}\label{AlkSurncond}
\int_{0}\exp\Big(-C\exp\big( \beta\lambda(r) \big)  \Big)\,
\frac{dr}{r}=+\infty,
\end{equation}
where $C$ and $\beta$ are some positive constants, depending only upon the data.
We note that the function $\lambda(r):=L\ln\ln\ln r^{-1}$, $r\in(0,e^{-e})$,
$L\beta<1$,
satisfies condition \eqref{AlkSurncond}.

In \cite{SkrVoitarXiv20}, we attempted to systematize and unify
the approach to establish the local regularity of bounded solutions of elliptic
and parabolic equations with non-standard growth.
For this, we have introduced  elliptic and parabolic $\mathcal{B}_{1}$ classes,
which generalize the well-known $\mathfrak{B}_{p}$ classes ($p>1$)
of De\,Giorgi, Ladyzhenskaya, Ural'tseva \cite{LadUr}
and cover their other numerous and scattered analogues
(see references in \cite{SkrVoitarXiv20}).
%In \cite{SkrVoitUMB19, SkrVoitarXiv20} the authors were introduced elliptic and parabolic
%$\mathcal{B}_{1}$ classes, which generalized the well-known classes of De\,Giorgi,
%Ladyzhenskaya and Ural'tseva.
It was proved in \cite{SkrVoitarXiv20} that functions from the
$\mathcal{B}_{1,g,\lambda}(\Omega)$ class are continuous
if conditions (${\rm g}_{1}$), (${\rm g}_{3}$)
and \eqref{AlkSurncond} are fulfilled.
In addition, if condition (${\rm g}_{2}$) is fulfilled, then the
solutions of Eq. \eqref{gellequation} belong to the $\mathcal{B}_{1,g,\lambda}(\Omega)$ class.
%The $\mathcal{B}_{1}$-class approach to equation Eq. \eqref{gellequation} is general enough
%and has a number of advantages.
%It allows us to stay within the framework of the ordinary theory of Sobolev spaces:
%to use the embedding $W^{1,1}(\Omega)\subset L^{\frac{n}{n-1}}(\Omega)$ and the classical
% De\,Giorgi-Poincar\'{e} inequality \cite[Chapter~2, Lemma~3.9]{LadUr} as the main tools.
At the same time, we do not use the specific properties of the generalized Orlicz and
Sobolev-Orlicz spaces, as was done, for example, in the papers of Harjulehto, H\"{a}st\"{o} et al
\cite{HarHastOrlicz, HarHastoZ.Anal19, HarHastLee2018, HarHastToiv17}.
Although it should be noted that in the case when $0\leqslant\lambda(r)<L<+\infty$,
the assumptions (${\rm g}_{1}$), (${\rm g}_{2}$), (${\rm g}_{3}$) are almost equivalent
to the conditions $({\rm aDec})_{q}^{\infty}$,
(${\rm A}1$-n), $({\rm aInc})_{p}$ from their papers.

Returning to our paper \cite{SkrVoitarXiv20},
we note that there are no Harnack-type theorems in it.
Although, such type results were obtained in
\cite{Alhutov97, AlhutovKrash04, AlkhSurnApAn19, AlkhSurnTrPetr19, Ok_AdvNonlAn18}
in the log-case and in \cite{SurnPrepr2018} under condition \eqref{lnlnlnZhikcond}.
%In the log-case Harnack's inequality was proved in
%\cite{Alhutov97, AlhutovKrash04, AlkhSurnApAn19, AlkhSurnTrPetr19}.
Therefore, it is natural to conjecture  that the Harnack inequality holds for
bounded solutions of Eq. \eqref{gellequation}
under the conditions (${\rm g}$), (${\rm g}_{1}$)--(${\rm g}_{3}$).
In this paper, we give a positive answer to this hypothesis.
This also encompasses the classic results of
Moser \cite{Moser1961}, Serrin \cite{Serrin}, Trudinger \cite{TrudingerArch71}
and Di\,Benedetto\,$\&$\,Trudinger \cite{DiBenedettoTrud84} for bounded solutions
in the standard growth case, and of course,
we use some of the ideas of Moser and Trudinger in our proofs.

Before formulating the main results, let us remind the reader the definition of a weak solution
to Eq. \eqref{gellequation}.
Moreover, throughout the article, we use the well-known notation for sets in $\mathbb{R}^{n}$,
spaces of functions and their elements, etc. (see, for instance, \cite{LadUr}).
In particular, we will use the notation $\fint\limits_{E}f\,dx:=|E|^{-1}\int\limits_{E}f\,dx$
for any measurable set $E\subset \mathbb{R}^{n}$ with $|E|\neq 0$ and $f\in L^{1}(E)$,
where $|E|$ denotes the $n$-dimensional Lebesgue measure of $E$.

We set
\begin{equation}\label{deffuncgw}
G(x, {\rm v}):=g(x,{\rm v}){\rm v} \ \ \text{for} \ \ {\rm v}\geqslant0
\end{equation}
%\begin{equation}\label{defG}
%\mathcal{G}(x, {\rm w}):=\int\limits_{0}^{{\rm w}}g(x,{\rm v})\,d{\rm v}
%\ \ \text{for} \ \ {\rm w}>0
%\end{equation}
and write $u\in W^{1,G}(\Omega)$ if $u\in W^{1,1}(\Omega)$
and $\int\limits_{\Omega}G(x, |\nabla u|)\,dx<+\infty$;
$u\in W^{1,G}_{{\rm loc}}(\Omega)$ if $u\in W^{1,G}(E)$
for any open set $E$ compactly embedding in $\Omega$.
%for the class of functions $u:\Omega\rightarrow\mathbb{R}$
%which are weakly differentiable in $\Omega$ with ;
We denote by $W^{1,G}_{0}(\Omega)$ the set of all functions
$u\in W^{1,G}(\Omega)$ which have a compact support in $\Omega$.
% $W^{1,G}_{0}(\Omega):=
%\left\{ w\in W^{1,G}(\Omega): w \text{ has a compact support in } \Omega\right\}$.
\begin{definition}
{\rm We say that a function $u:\Omega\rightarrow\mathbb{R}$ is a bounded weak solution (subsolution, supersolution) to Eq. \eqref{gellequation} if
$u\in W^{1,G}_{{\rm loc}}(\Omega)\cap L^{\infty}(\Omega)$ and the integral identity (inequality)
\begin{equation}\label{gelintidentity}
\int\limits_{\Omega}
g(x,|\nabla u|)\,\frac{\nabla u}{|\nabla u|}\,\nabla\varphi\,dx=(\leqslant\,,\geqslant)\,0
\end{equation}
holds for any $\varphi\in W^{1,G}_{0}(\Omega)$
(for subsolutions and supersolutions, we require $\varphi\geqslant 0$).}
\end{definition}

%For any  bounded weak solution (supersolution) $u$ to Eq.~\eqref{gellequation}
%under conditions {\rm (${\rm g}$)},  {\rm (${\rm g}_{1}$)}--{\rm (${\rm g}_{3}$)},
We refer to the parameters
$M:=\esssup\limits_{\Omega} |u|$, $n$, $p$, $q$, $c_{1}$, $c_{2}(M)$
as our structural data,
and we write $\gamma$ if it can be quantitatively determined a priori only in terms
of the above quantities.
The generic constant $\gamma$ may vary from line to line.
%even within the same line.

Our main result of this paper reads as follows.
\begin{theorem}[weak Harnack inequality]\label{thweakHarnck}
Fix a point $x_{0}\in \Omega$ and consider the ball $B_{8\rho}(x_{0})\subset\Omega$.
Let $u$ be a nonnegative bounded weak supersolution to Eq. \eqref{gellequation}
under conditions {\rm (${\rm g}$)}, {\rm (${\rm g}_{1}$)}--{\rm (${\rm g}_{3}$)}.
%with $K=M+2(1+b_{0})$. be fulfilled
Then for any $0<s<n/(n-1)$ there holds:
\begin{multline}\label{weakHrnckineq}
\Bigg(\fint\limits_{B_{5\rho/4}(x_{0})}
g^{s}\left( x_{0},\frac{u+2(1+b_{0})\rho}{\rho} \right)dx\Bigg)^{1/s}
\leqslant\gamma\,\Lambda(\gamma,3n,\rho)\,
%\bigg(1-\frac{n-1}{n}s\bigg)
%^{-\gamma e^{2n\lambda(\rho)}}
g\left( x_{0},\frac{m(\rho)+2(1+b_{0})\rho}{\rho} \right),
\end{multline}
where %positive constant $\gamma$ depends only on the known data,
$m(\rho):=\essinf\limits_{B_{\rho}(x_{0})}u$ and
$\Lambda(c,\beta,\rho):= \exp\Big(c\exp \big( \beta\lambda(\rho)  \big)  \Big)$
for any $c$, $\beta\in \mathbb{R}$ and $\rho\in (0,r_{\ast})$.
%the value of $\Lambda(\gamma,3n,\rho)$
%was defined in Theorem  \ref{sk.th 1.1}.
%$\fint\limits_{E}f\,dx:=|E|^{-1}\int\limits_{E}f\,dx$
%for any measurable set $E\subset \mathbb{R}^{n}$ and $f\in L^{1}(E)$,
%and $|E|$ denotes the $n$-dimensional Lebesgue measure of $E$.
\end{theorem}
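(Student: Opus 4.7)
The plan is to adapt the Moser--Bombieri--Giusti approach to the weak Harnack inequality to the generalized Orlicz setting. Set $u_{k}:=u+2(1+b_{0})\rho$ and introduce the frozen quantity $w(x):=g(x_{0},u_{k}/\rho)$. Working with $w$ rather than with $u_{k}$ directly makes the role of $(\mathrm{g}_{3})$ explicit: every time we swap the evaluation point from a running $x$ to the fixed base point $x_{0}$ we pay a factor $e^{c\lambda(\rho)}$, and the key of the whole argument is to show that the accumulation of these factors through the iteration is exactly of the form $\Lambda(\gamma,3n,\rho)$ and no worse.

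\textbf{Reverse H\"older inequalities.} The first building block is the Caccioppoli estimate obtained by testing the supersolution identity with $\varphi=\zeta^{q}u_{k}^{-\beta}$ ($\beta>0$, $\zeta$ a cutoff between concentric balls). Expanding $\nabla\varphi$, using $(\mathrm{g}_{1})$--$(\mathrm{g}_{2})$ and a suitable Orlicz Young inequality, and then replacing $g(x,\cdot)$ by $g(x_{0},\cdot)$ via $(\mathrm{g}_{3})$, one arrives at a Caccioppoli-type bound for $w^{-\sigma}$ which, after Sobolev embedding, converts to a reverse H\"older inequality of the shape
\[
\Bigg(\fint_{B_{r'}} w^{\chi\sigma}\,dx\Bigg)^{1/(\chi\sigma)}\leqslant\Big(\gamma\,e^{c\lambda(\rho)}\Big)^{\kappa(\sigma)}\Bigg(\fint_{B_{r}} w^{\sigma}\,dx\Bigg)^{1/\sigma},
\]
with $\chi>1$ coming from Sobolev and $\sigma\in\mathbb{R}\setminus\{0\}$. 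Iterating this from $\sigma=-s_{0}$ down to $\sigma\to-\infty$ along a geometric sequence of radii collapsing to $B_{\rho}$, and carefully tracking the accumulated $e^{c\lambda(\rho)}$ factors, produces the one-sided Moser bound
\[
g\!\left(x_{0},\tfrac{m(\rho)+2(1+b_{0})\rho}{\rho}\right)\geqslant \gamma^{-1}\,\Lambda(\gamma,3n,\rho)^{-1}\Bigg(\fint_{B_{5\rho/4}} w^{-s_{0}}\,dx\Bigg)^{-1/s_{0}}
\]
for a fixed $s_{0}>0$.

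\textbf{Logarithmic estimate.} The second building block comes from testing with $\varphi=\zeta^{q}/u_{k}$. Simplifying via $(\mathrm{g}_{1})$--$(\mathrm{g}_{3})$ and Jensen yields a bound of the form
\[
\fint_{B_{2\rho}} G\!\left(x_{0},|\nabla\log u_{k}|\right)dx\leqslant \gamma\,e^{c\lambda(\rho)}\,G\!\left(x_{0},\rho^{-1}\right).
\]
Combined with Poincar\'e's inequality and a John--Nirenberg-type iteration, this gives a BMO-type bound: there exist $\eta(\rho)\geqslant\gamma^{-1}e^{-c\lambda(\rho)}$ and $c_{\rho}\in\mathbb{R}$ with
\[
\fint_{B_{3\rho/2}}\exp\!\Big(\eta(\rho)\,\big|\log(u_{k}/\rho)-c_{\rho}\big|\Big)\,dx\leqslant\gamma,
\]
so that $(\fint w^{s})^{1/s}$ and $(\fint w^{-s})^{-1/s}$ are comparable up to the factor $e^{c\lambda(\rho)}$ for every $0<s\leqslant\eta(\rho)$.

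\textbf{Combination and main obstacle.} Chaining the reverse H\"older of Step~2 (with $\sigma<0$) through the John--Nirenberg bridge of Step~3, the Bombieri--Giusti lemma then yields \eqref{weakHrnckineq} with $s$ replaced by some initial $s_{*}(\rho)\asymp e^{-c\lambda(\rho)}$. To upgrade $s_{*}(\rho)$ to an arbitrary target $s<n/(n-1)$ one runs the reverse H\"older chain of Step~2 in the \textbf{positive} direction $\sigma>0$. The number of Moser-type steps needed to bridge the gap from $s_{*}(\rho)$ up to $n/(n-1)$ is of order $\log(1/s_{*}(\rho))\sim \lambda(\rho)$, and each step multiplies the constant by $e^{c\lambda(\rho)}$, so the total accumulated factor is of order $\exp\!\big(c\,e^{\beta\lambda(\rho)}\big)=\Lambda(\gamma,\beta,\rho)$. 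The main technical obstacle is precisely this bookkeeping: one must choose the geometric ratios and the intermediate exponents so that $\beta$ can be taken equal to $3n$, uniformly over the admissible range of $s$, rather than any larger (and thus worse) power. Everything else---Caccioppoli, Sobolev, John--Nirenberg---has a standard Orlicz formulation; it is the inhomogeneity $(\mathrm{g}_{3})$ that drives the subtlety.
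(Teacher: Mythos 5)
Your proposal follows essentially the same three-stage Moser--Trudinger/Bombieri--Giusti strategy as the paper: a logarithmic Caccioppoli estimate (paper's Lemma~2.1), an exponential-moment bridge with a $\rho$-dependent exponent $\delta_{0}(\rho)\sim e^{-c\lambda(\rho)}$ (Lemma~2.2), and a positive-power reverse-H\"older chain from $\delta_{1}$ up to $s<n/(n-1)$ (Lemma~2.4). The key ingredient you identify---accumulation of $e^{c\lambda(\rho)}$ losses through iteration, controlled by $({\rm g}_{3})$---is exactly the mechanism the paper uses.

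Two remarks where your sketch diverges from what would actually work. First, your per-step accounting in the combination paragraph is a non-sequitur: if there are $\sim\lambda(\rho)$ Moser steps and each ``multiplies the constant by $e^{c\lambda(\rho)}$,'' the naive product is $e^{c\lambda(\rho)^{2}}$, which is far smaller than $\Lambda(\gamma,\beta,\rho)=\exp(\gamma e^{\beta\lambda(\rho)})$. The true source of the double exponential is not the number of steps but the fact that in a Moser chain the per-step factor $C_{i}\sim e^{c\lambda(\rho)}$ is raised to the reciprocal exponent $1/t_{i+1}$, and these weights increase geometrically to $1/t_{j+1}\sim 1/\delta_{1}\sim e^{c\lambda(\rho)}$. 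The accumulated factor is therefore $C^{\sum_{i} 1/t_{i+1}}\sim C^{1/\delta_{1}}\sim\exp\bigl(c\lambda(\rho)\,e^{c\lambda(\rho)}\bigr)$, which is what gives $\Lambda$. Your own $\kappa(\sigma)$ factor in the reverse-H\"older display encodes precisely this weight, but the verbal summary that follows loses it. Second, the object you iterate cannot literally be $w(x)=g(x_{0},u_{k}/\rho)$: the assumption $({\rm g})$ only makes $g(x_{0},\cdot)$ continuous and nondecreasing, not differentiable, so there is no chain rule for $\nabla[g(x_{0},u_{k}/\rho)^{\alpha}]$. The paper circumvents this by iterating the $C^{1}$ composites $\mathcal{G}(x_{0},\overline{u}/\rho)=\int_{0}^{\overline{u}/\rho}g(x_{0},v)\,dv$ and $\psi=\mathcal{G}/{\rm w}$, for which $(p-1)\psi/{\rm w}\leqslant\psi'_{\rm w}\leqslant\gamma\psi/{\rm w}$, and converts back to $g$ only at the end via the two-sided comparisons $\gamma^{-1}g\leqslant\psi\leqslant p^{-1}g$ valid for ${\rm w}\geqslant 2(1+b_{0})$. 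Your version with test functions $\zeta^{q}u_{k}^{-\beta}$ would also work, but then you must keep the conversion between powers of $u_{k}$ and powers of $g(x_{0},\cdot)$ (via $({\rm g}_{1})$) explicit, with the rescaling $\delta_{1}=\delta_{0}/(q-1)$ the paper uses in Lemma~2.3.

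Subject to repairing those two bookkeeping points, the proposal is the same proof as the paper's and would yield the stated $\Lambda(\gamma,3n,\rho)$.
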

\begin{corollary}\label{conttheorem}
Let $u$ be a nonnegative bounded weak solution to Eq. \eqref{gellequation}
under conditions {\rm (${\rm g}$)}, {\rm (${\rm g}_{1}$)}--{\rm (${\rm g}_{3}$)},
and let $\rho_{0}$ be a sufficiently small positive number such that
$B_{8\rho_{0}}(x_{0})\subset\Omega$.
There exist positive numbers $c$, $\beta$ depending only on the data
such that if $\Lambda(c,\beta,r)\leqslant\frac{3}{2}\, \Lambda(c,\beta,2r)$
for all $0<r\leqslant \rho/2<\rho_{0}/2$, and additionally
$$
\int\limits_{0} \Lambda(-c,\beta,r)\,\frac{dr}{r}=+\infty \quad \text{and}
\quad \lim\limits_{r\rightarrow0}\, r\Lambda(c,\beta,r)=0,
$$
then the solution $u$ is continuous at $x_{0}$.
Particularly, the function $\lambda(r)=L\ln\ln\ln r^{-1}$,
$r\in (0,e^{-e})$, satisfies the above conditions if $0<L<1/\beta$.
%\begin{equation}\label{Lambd32cond}
%\Lambda(c,\beta,r)\leqslant
%\frac{3}{2}\, \Lambda(c,\beta,2r)
%\quad \text{for all} \ \ 0<r\leqslant \rho/2<\rho_{0}/2,
%\end{equation}
%then the following inequality holds for all $0<2r<\rho<\rho_{0}$:
%\begin{multline}\label{estellipth2.1}
%{\rm osc}\{u; B_{r}(x_{0})\}:=
%\esssup\limits_{B_{r}(x_{0})}u - \essinf\limits_{B_{r}(x_{0})}u
%\\
%\leqslant
%2M\exp\left( -\gamma\int\limits_{2r}^{\rho}
%\Lambda(-c,\beta,t)\,\frac{dt}{t} \right)
%+\gamma(1+s_{0})\,\rho\,\Lambda(c,\beta,\rho).
%\end{multline}
%If additionally
%\begin{equation}\label{condonLambdabetar}
%\int\limits_{0} \Lambda(-c,\beta,r)\,\frac{dr}{r}=+\infty
%\quad \text{and} \quad
%\lim\limits_{r\rightarrow0}\, r\Lambda(c,\beta,r)=0,
%\end{equation}
%then $u$ is continuous at $x_{0}$.
\end{corollary}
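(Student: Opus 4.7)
The plan is to extract an oscillation-decay inequality from Theorem~\ref{thweakHarnck} and iterate it along a geometric sequence $\rho_k:=8^{-k}\rho_0$.

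Let $M_r:=\esssup_{B_r(x_0)}u$, $m_r:=\essinf_{B_r(x_0)}u$, and $\omega_r:=M_r-m_r$. Since \eqref{gellequation} is invariant under $u\mapsto C\pm u$, both $v:=u-m_{8\rho}$ and $w:=M_{8\rho}-u$ are nonnegative bounded weak supersolutions on $B_{8\rho}(x_0)$, so Theorem~\ref{thweakHarnck} applies to each. On $B_{5\rho/4}(x_0)$, either $\{u\leq(M_{8\rho}+m_{8\rho})/2\}$ or its complement has measure at least $|B_{5\rho/4}|/2$; in the former alternative $w\geq\omega_{8\rho}/2$ on that half-measure set, so the $L^s$-average on the left-hand side of \eqref{weakHrnckineq} applied to $w$ is bounded below by $2^{-1/s}g(x_0,(\omega_{8\rho}/2+2(1+b_0)\rho)/\rho)$. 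Combining with the weak Harnack for $w$ and inverting through $({\rm g}_2)$ (which turns $g(x_0,a/\rho)\leq K g(x_0,b/\rho)$ with $a\geq b>0$ into $a\leq K^{1/(p-1)}b$) produces, for suitable constants $c_0,c,\beta,C$ depending only on the data,
\begin{equation*}
M_{8\rho}-M_\rho \geq c_0\Lambda(-c,\beta,\rho)\,\omega_{8\rho}-C\rho,
\end{equation*}
and the symmetric alternative yields the same bound for $m_\rho-m_{8\rho}$. Since $m_\rho\geq m_{8\rho}$ and $M_\rho\leq M_{8\rho}$ trivially, in either case one concludes
\begin{equation*}
\omega_\rho \leq (1-c_0\Lambda(-c,\beta,\rho))\,\omega_{8\rho}+C\rho.
\end{equation*}

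Setting $\theta_k:=c_0\Lambda(-c,\beta,\rho_k)$ and iterating yields
\begin{equation*}
\omega_{\rho_k} \leq \omega_{\rho_0}\prod_{j=0}^{k-1}(1-\theta_j)+C\sum_{j=0}^{k-1}\rho_j\prod_{l=j+1}^{k-1}(1-\theta_l).
\end{equation*}
The doubling-type assumption $\Lambda(c,\beta,r)\leq\tfrac32\Lambda(c,\beta,2r)$ makes $\Lambda(-c,\beta,\rho_k)$ comparable at consecutive dyadic scales, so by the integral test the divergence of $\sum_k\Lambda(-c,\beta,\rho_k)$ is equivalent to the hypothesis $\int_0\Lambda(-c,\beta,r)\,dr/r=+\infty$; hence the infinite product $\prod(1-\theta_j)$ tends to $0$. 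The additive tail is handled by the standard $\varepsilon$-argument: given $\varepsilon>0$, choose $j_0$ so that $\sum_{j\geq j_0}\rho_j<\varepsilon$, then let $k\to\infty$ so that the head-sum over $j<j_0$ is driven to $0$ by the vanishing product. Since $\omega_r$ is monotone in $r$, this forces $\omega_{\rho_k}\to 0$ and $u$ is therefore continuous at $x_0$.

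The main obstacle is controlling the additive $2(1+b_0)\rho$ shift that sits inside the argument of $g$ in~\eqref{weakHrnckineq}. When $\omega_{8\rho}$ is not much larger than $\rho\Lambda(c,\beta,\rho)^{1/(p-1)}$, the inversion through $({\rm g}_2)$ degenerates and the decay estimate above becomes vacuous; this is precisely where the hypothesis $\lim_{r\to 0}r\Lambda(c,\beta,r)=0$ enters, since in that degenerate regime one concludes directly $\omega_{8\rho}\leq C'\rho\Lambda(c,\beta,\rho)\to 0$, which is compatible with the conclusion. The final verification that $\lambda(r)=L\ln\ln\ln r^{-1}$ meets the three hypotheses when $0<L<1/\beta$ is then a routine computation: $\Lambda(c,\beta,r)=\exp(c(\ln r^{-1})^{L\beta})$ has slowly varying doubling, the integral condition holds because $L\beta<1$, and $r\Lambda(c,\beta,r)\to 0$ for the same reason.
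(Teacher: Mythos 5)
Your overall strategy is correct, and it is the natural one: extract an oscillation-decay inequality from the weak Harnack inequality by applying it to $w:=M_{8\rho}-u$ or $v:=u-m_{8\rho}$ (whichever is $\geq\omega_{8\rho}/2$ on at least half of $B_{5\rho/4}$), invert through $({\rm g}_2)$, and iterate dyadically. The paper itself does not record a proof of this corollary (it only says the remainder of the paper proves Theorems~\ref{thweakHarnck} and \ref{sk.th 3.1}), so there is nothing internal to compare against; yours is the standard Trudinger/De Giorgi route, and the reduction to $\sum_j\Lambda(-c,\beta,\rho_j)=+\infty$ via the stated doubling of $\Lambda$ is exactly what that hypothesis is for. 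Two points, however, deserve correction.

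First, your discussion of the ``degenerate regime'' is unnecessary and, as written, not airtight. The inequality
\[
M_{8\rho}-M_\rho\;\geqslant\;K^{-1/(p-1)}\tfrac{\omega_{8\rho}}{2}-2(1+b_0)\rho,\qquad K:=2^{1/s}\gamma\,\Lambda(\gamma,3n,\rho),
\]
does not require the right-hand side to be positive: the inversion $a\leqslant K^{1/(p-1)}b$ through $({\rm g}_2)$ is valid for all $a,b>0$ once $g(x_0,a/\rho)\leqslant Kg(x_0,b/\rho)$, since the case $a\leqslant b$ is trivial. Hence the recursion $\omega_\rho\leqslant(1-\theta(\rho))\omega_{8\rho}+C\rho$ holds \emph{unconditionally}, no case split is needed, and the $\varepsilon$-argument you describe for the additive tail uses only $\sum_j\rho_j<\infty$. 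If you insist on the case split, the phrase ``one concludes directly $\omega_{8\rho}\leqslant C'\rho\Lambda(c,\beta,\rho)\to0$'' is not by itself a proof that $\omega_{\rho_k}\to0$: smallness at one scale does not trivially propagate to all finer scales, because $r\mapsto r\Lambda(c,\beta,r)$ need not be monotone. The unconditional recursion avoids this entirely. (A side remark: as a consequence, the hypothesis $\lim_{r\to0}r\Lambda(c,\beta,r)=0$ is not actually invoked in the argument you gave; that is harmless for a sufficiency statement, but you should not present it as load-bearing when it is not.)

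Second, there is a computational slip in the closing verification. For $\lambda(r)=L\ln\ln\ln r^{-1}$ one has $e^{\beta\lambda(r)}=(\ln\ln r^{-1})^{L\beta}$, so
\[
\Lambda(c,\beta,r)=\exp\!\bigl(c\,(\ln\ln r^{-1})^{L\beta}\bigr),
\]
not $\exp(c(\ln r^{-1})^{L\beta})$ as you wrote. With your (incorrect) formula the substitution $t=\ln r^{-1}$ would give $\int^{\infty}e^{-ct^{L\beta}}\,dt<\infty$ for every $L\beta>0$, contradicting the claimed divergence. With the correct formula and the substitution $r=e^{-e^t}$ one finds $\int_0\Lambda(-c,\beta,r)\,\frac{dr}{r}=\int^\infty\exp(t-ct^{L\beta})\,dt$, which diverges precisely when $L\beta<1$; the doubling property and $r\Lambda(c,\beta,r)\to0$ then follow because $(\ln\ln r^{-1})^{L\beta}$ grows slower than any power of $\ln r^{-1}$.
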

\begin{theorem}[Moser-type sub-estimate of solutions]\label{sk.th 3.1}
%Let all the assumptions  of Theorem \ref{sk.th 1.1} be fulfilled,
Fix a point $x_{0}\in \Omega$ and consider the ball $B_{8\rho}(x_{0})\in\Omega$.
Let conditions {\rm (${\rm g}$)}, {\rm (${\rm g}_{1}$)}--{\rm (${\rm g}_{3}$)}
be fulfilled, and let $u$ be a nonnegative bounded weak solution %with $K=M+2(1+b_{0})$
to Eq. \eqref{gellequation}, $M(\rho):=\esssup\limits_{B_{\rho}(x_{0})}u$.
Then the following inequality holds:
\begin{equation}\label{sk 3.1}
g\left(x_0, \frac{M(\rho)+2(1+b_0)\rho}{\rho}\right)
\leqslant
\gamma\, e^{2n\lambda(\rho)}\fint\limits_{B_{5\rho/4}(x_0)}
g\left(x_0, \frac{u+2(1+b_0)\rho}{\rho}\right) dx.
\end{equation}
\end{theorem}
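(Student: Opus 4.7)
The plan is to perform a Moser iteration for the weak subsolution $u$ adapted to the generalized Orlicz growth, working throughout with the shifted function $\bar u := u + 2(1+b_0)\rho$, whose scaled values $\bar u/\rho \geqslant 2(1+b_0) \geqslant b_0$ lie in the range where $(g_1)$ and $(g_3)$ are simultaneously applicable. The central idea is first to freeze the coefficient at $x_0$ via $(g_3)$ (each freezing costing one factor $e^{\lambda(\rho)}$) and then to run a standard Moser iteration on the frozen quantity $\Phi := g(x_0,\bar u/\rho)$.

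The first step is a Caccioppoli-type energy estimate. Fix concentric balls $B_{r_{k+1}} \Subset B_{r_k} \subset B_{5\rho/4}(x_0)$ with $r_k := \rho\bigl(1 + 2^{-(k+2)}\bigr) \downarrow \rho$, and a cutoff $\eta \in C_0^{\infty}(B_{r_k})$ with $\eta \equiv 1$ on $B_{r_{k+1}}$ and $|\nabla\eta|\leqslant 2/(r_k - r_{k+1})$. Testing \eqref{gelintidentity} with $\varphi := \eta^{q}\,\bar u^{\,\beta-1}$ (valid since $u$ is in particular a subsolution) and applying a Young-type inequality via $(g_1)$--$(g_2)$ to absorb the $\nabla\eta$ term, I obtain
\begin{equation*}
\int_{B_{r_k}} g\bigl(x,|\nabla \bar u|\bigr)\,|\nabla \bar u|\,\eta^{q}\,\bar u^{\,\beta-2}\,dx \leqslant \frac{\gamma\,\beta^{q}}{(r_k-r_{k+1})^{q}} \int_{B_{r_k}} g\!\left(x,\tfrac{\bar u}{r_k-r_{k+1}}\right) \bar u^{\,\beta-1}\,dx.
\end{equation*}
Then $(g_3)$ with $K := M+2(1+b_0)\rho$ replaces every $g(x,\cdot)$ by $g(x_0,\cdot)$ at the uniform cost $c_2\,e^{\lambda(\rho)}$. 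Combining with the Sobolev embedding in $L^{\chi}$, $\chi := n/(n-1)$, applied to $\eta\,\Phi^{\theta_k}$ with $\theta_k := \chi^{k}$, and using $(g_1)$--$(g_2)$ to pass between $g\bigl(x_0,|\nabla\bar u|\bigr)$ and $|\nabla \Phi^{\theta_k}|$, yields a reverse H\"older step
\begin{equation*}
\left(\fint_{B_{r_{k+1}}} \Phi^{\chi\theta_k}\,dx\right)^{1/\chi} \leqslant \gamma\,e^{\lambda(\rho)}\,\Bigl(\tfrac{r_k}{r_k-r_{k+1}}\Bigr)^{\sigma}\, \fint_{B_{r_k}} \Phi^{\theta_k}\,dx,
\end{equation*}
for a constant $\sigma = \sigma(n,p,q)$.

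Iterating this reverse H\"older step and letting $k\to\infty$, the product of the geometric-radius factors converges by the familiar Moser argument, while the $e^{\lambda(\rho)}$ factors accumulate as $\exp\!\bigl(\lambda(\rho)\sum_{k\geqslant 0}\chi^{-k}\bigr)$, a finite geometric series bounded (after the standard iteration arithmetic with the chosen $\theta_k$) by $e^{2n\lambda(\rho)}$. This produces
\begin{equation*}
\esssup_{B_\rho(x_0)} \Phi \leqslant \gamma\,e^{2n\lambda(\rho)} \fint_{B_{5\rho/4}(x_0)}\Phi\,dx,
\end{equation*}
and since $g(x_0,\cdot)$ is non-decreasing and $\bar u/\rho \geqslant b_0$, $\esssup_{B_\rho}\Phi = g\bigl(x_0,(M(\rho)+2(1+b_0)\rho)/\rho\bigr)$, giving \eqref{sk 3.1}. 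The main technical obstacle is precisely this bookkeeping of the freezing factors: each Moser level contributes a multiplicative $e^{\lambda(\rho)}$ from $(g_3)$, and one must verify that after taking $\chi^{-k}$-th roots at step $k$ the accumulated product stays bounded by a \emph{fixed} power of $e^{\lambda(\rho)}$, with the precise exponent $2n$ dictated by the Sobolev gain $n/(n-1)$. A subsidiary difficulty is the correct Young-type handling of the $(p,q)$-gap in the energy step so that the resulting iteration exponents depend only on $n$, $p$, $q$.
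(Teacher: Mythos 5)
Your proposal has two genuine gaps, and the second one is the heart of the actual proof.

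First, you propose to run Moser iteration directly on $\Phi := g\bigl(x_{0},\overline{u}/\rho\bigr)$, asserting that $(g_{1})$--$(g_{2})$ let you ``pass between $g(x_{0},|\nabla\overline{u}|)$ and $|\nabla\Phi^{\theta_{k}}|$.'' This step is not justified: assumption $(\mathrm{g})$ only requires $g(x_{0},\cdot)$ to be continuous and non-decreasing, not differentiable, so $\nabla\Phi^{\theta_{k}}$ may not even exist as a classical chain-rule object, and $(g_{1})$--$(g_{2})$ control quotients $g(x,\mathrm{w})/g(x,\mathrm{v})$, not difference quotients, hence give no bound on a derivative of $g$. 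The paper avoids this by working with $\mathcal{G}(x_{0},\mathrm{w}):=\int_{0}^{\mathrm{w}}g(x_{0},\mathrm{v})\,d\mathrm{v}$, which is $C^{1}$ in $\mathrm{w}$ with $\mathcal{G}'_{\mathrm{w}}=g$; the test function is $\varphi=\overline{u}\,\mathcal{G}^{\,s-1}(x_{0},\overline{u}/\rho)\,\zeta^{\,l}$, and all iterated quantities are powers of $\mathcal{G}$, so every gradient that appears is controllable via $g(x_{0},\overline{u}/\rho)\,\nabla\overline{u}$.

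Second, and more seriously, even after replacing $g$ by $\mathcal{G}$ your scheme stops one full step short of the theorem. The Moser iteration (the paper's inner $j$-iteration) yields a sup-estimate of $\mathcal{G}$ in terms of an average of $\mathcal{G}$, and since $\mathcal{G}(x_{0},\mathrm{w})\asymp g(x_{0},\mathrm{w})\,\mathrm{w}$ by \eqref{G>gw}--\eqref{gw>pG}, converting it into the desired inequality \eqref{sk 3.1} leaves an uncontrolled ratio $\dfrac{M_{i+1}+2(1+b_{0})\rho}{M_{i}+2(1+b_{0})\rho}$ (supremum on a slightly larger ball over supremum on the smaller one) on the right-hand side. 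This ratio is not boundable a priori, so the argument cannot terminate after one Moser pass. The paper removes it with a \emph{second}, outer iteration in $i$: it invokes the $(g_{2})$-based Young-type inequality \eqref{gYoungineq2} to write
\begin{equation*}
g\!\left(x_{0},\tfrac{M_{i}+2(1+b_{0})\rho}{\rho}\right)\leqslant\varepsilon^{\,p-1}\,g\!\left(x_{0},\tfrac{M_{i+1}+2(1+b_{0})\rho}{\rho}\right)+\frac{\gamma\,2^{i\gamma}}{\varepsilon}\,e^{2n\lambda(\rho)}\fint\limits_{B_{5\rho/4}(x_{0})}g\!\left(x_{0},\tfrac{\overline{u}}{\rho}\right)dx,
\end{equation*}
iterates in $i$, chooses $\varepsilon$ so that $\varepsilon^{\,p-1}2^{\gamma}=1/2$, and lets $i\to\infty$ using boundedness of $u$. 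This nested absorption is precisely where the hypothesis $(g_{2})$ (equivalently, the lower $p$-growth) enters decisively, and it is entirely absent from your outline. Without it, your claimed conclusion $\esssup_{B_{\rho}}\Phi\leqslant\gamma e^{2n\lambda(\rho)}\fint_{B_{5\rho/4}}\Phi\,dx$ does not follow from the reverse H\"{o}lder chain.
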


%Setting  ${\rm w}=\dfrac{M(\rho)+2(1+b_0)\rho}{\rho}$ and
%${\rm v}=\dfrac{m(\rho)+2(1+b_0)\rho}{\rho}$ in \eqref{gpineq}, and combining this, \eqref{sk 3.1}
%and \eqref{weakHrnckineq} (for $s=1$), we arrive at the following Harnack-type
%theorem to Eq. \eqref{gellequation}.
From Theorems \ref{thweakHarnck}, \ref{sk.th 3.1} we arrive at
\begin{theorem}[Harnack inequality]\label{sk.th 1.1}
Let all the assumptions  of Theorems \ref{thweakHarnck}, \ref{sk.th 3.1} be fulfilled.
%Fix a point $x_{0}\in \Omega$ and consider the ball $B_{8\rho}(x_{0})\in\Omega$.
%Let $u$ be a nonnegative bounded weak solution to Eq. \eqref{gellequation}, and
%let conditions {\rm (${\rm g}_{1}$)}, {\rm (${\rm g}_{2}$)} and {\rm (${\rm g}_{3}$)}
%with $K=M+2(1+b_{0})$ be fulfilled.
Then there exist positive constants $C$, $c$, $\beta$
depending only on the data such that
\begin{equation}\label{Harnckineq}
\esssup\limits_{B_{\rho}(x_{0})}u\leqslant C\Lambda(c,\beta,\rho)
\Big(\essinf\limits_{B_{\rho}(x_{0})}u+(1+b_{0})\rho\Big),
\end{equation}
where $\Lambda(c,\beta,\rho)$ was defined in Theorem~\ref{thweakHarnck}.
\end{theorem}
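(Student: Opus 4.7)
The plan is to combine Theorem~\ref{thweakHarnck} with Theorem~\ref{sk.th 3.1} and then invert the $g$--inequality using the lower bound $(\mathrm{g}_{2})$.

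First, I would apply Theorem~\ref{sk.th 3.1} to obtain
$$
g\!\left(x_{0},\tfrac{M(\rho)+2(1+b_{0})\rho}{\rho}\right)
\leqslant \gamma\,e^{2n\lambda(\rho)}
\fint\limits_{B_{5\rho/4}(x_{0})}\!\!g\!\left(x_{0},\tfrac{u+2(1+b_{0})\rho}{\rho}\right)dx,
$$
and then bound the right-hand integral by Theorem~\ref{thweakHarnck} with the choice $s=1$ (which is admissible since $1<n/(n-1)$ for $n\geqslant 2$). This yields
$$
g\!\left(x_{0},\tfrac{M(\rho)+2(1+b_{0})\rho}{\rho}\right)
\leqslant \gamma\,e^{2n\lambda(\rho)}\,\Lambda(\gamma,3n,\rho)\,
g\!\left(x_{0},\tfrac{m(\rho)+2(1+b_{0})\rho}{\rho}\right).
$$

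Second, I would absorb the prefactor into a single $\Lambda$. Since $e^{2n\lambda(\rho)}\leqslant\exp(2n\,e^{\lambda(\rho)})$ and $\Lambda(\gamma,3n,\rho)=\exp(\gamma\,e^{3n\lambda(\rho)})$, the product $\gamma\,e^{2n\lambda(\rho)}\,\Lambda(\gamma,3n,\rho)$ is dominated by $\Lambda(c_{0},\beta_{0},\rho)$ for some $c_{0},\beta_{0}$ depending only on the data.

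Third, I would invert $g$ by invoking $(\mathrm{g}_{2})$: since $M(\rho)\geqslant m(\rho)$, setting $W=(M(\rho)+2(1+b_{0})\rho)/\rho$ and $V=(m(\rho)+2(1+b_{0})\rho)/\rho$ gives $W\geqslant V>0$, hence
$$
\left(\tfrac{W}{V}\right)^{p-1}\leqslant \tfrac{g(x_{0},W)}{g(x_{0},V)}\leqslant \Lambda(c_{0},\beta_{0},\rho),
$$
so
$$
M(\rho)+2(1+b_{0})\rho
\leqslant \Lambda(c_{0},\beta_{0},\rho)^{1/(p-1)}\bigl(m(\rho)+2(1+b_{0})\rho\bigr).
$$
Raising $\Lambda$ to the power $1/(p-1)$ only rescales the inner constant, so the right-hand factor is again of the form $C\,\Lambda(c,\beta,\rho)$. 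Dropping the $+2(1+b_{0})\rho$ on the left and adjusting $C$ to absorb the $2(1+b_{0})\rho$ term on the right (noting $\Lambda\geqslant 1$) yields \eqref{Harnckineq}.

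The main obstacle here is essentially bookkeeping: everything substantive has been established in Theorems~\ref{thweakHarnck} and \ref{sk.th 3.1}. The only real content is (i)~checking that $s=1$ is allowed in the weak Harnack estimate, and (ii)~verifying that the composition of the polynomial factor $e^{2n\lambda(\rho)}$ with the double-exponential factor $\Lambda(\gamma,3n,\rho)$, and subsequently raising to $1/(p-1)$, still fits into the $\Lambda(c,\beta,\rho)$ template. Both are straightforward once one notes that $\exp$ of any polynomial in $e^{\lambda(\rho)}$ can be swallowed by $\exp(c\,e^{\beta\lambda(\rho)})$ after enlarging $c$ and $\beta$.
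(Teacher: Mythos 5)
Your argument is correct and is exactly the route the paper implicitly takes (the paper offers no detailed proof, just ``From Theorems \ref{thweakHarnck}, \ref{sk.th 3.1} we arrive at\ldots''). You chain the Moser sub-estimate \eqref{sk 3.1} into the weak Harnack bound \eqref{weakHrnckineq} with $s=1$, absorb $\gamma\,e^{2n\lambda(\rho)}$ into a $\Lambda(c_0,\beta_0,\rho)$ factor, invert $g$ via $(\mathrm{g}_2)$, and clean up the additive term using $m(\rho)+2(1+b_0)\rho\leqslant 2\bigl(m(\rho)+(1+b_0)\rho\bigr)$ and $\Lambda\geqslant 1$; every step checks out.
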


%The veracity of Theorem \ref{sk.th 1.1} follows from the
%weak Harnack inequality for supersolutions
%and a Moser-type sup-estimate of solutions to Eq. \eqref{gellequation}.
%These results are contained in the following two theorems.
%We apply Theorem~\ref{sk.th 1.1} to establish the point-wise continuity of solutions to
%Eq. \eqref{gellequation}.

%We note that the function $\lambda(r)=L\ln\ln\ln \dfrac{1}{r}$,
%$r\in (0,e^{-e})$, satisfies conditions \eqref{Lambd32cond}, \eqref{condonLambdabetar} if
%$0<L<1/\beta$.

%Considering specific ways of realizing the function $g(x, {\rm v})$
%by \eqref{examplfnctg12} and \eqref{eqSkr1.3},
%as corollaries of Theorems \ref{thweakHarnck}--\ref{sk.th 1.1},
%we obtain the previously known results for the $p(x)$-Laplace equation
%(see, e.g., \cite[$\lambda(r)\equiv {\rm const}$]{Alhutov97},%Theorems~1-3
%\cite[$\lambda(r)\approx L\ln\ln\ln r^{-1}$]{AlhutovKrash08, SurnPrepr2018}).
%But in the case of $(p,q)$-phase elliptic equations when the function $g(x, {\rm v})$
%is defined by \eqref{examplfnctg34}--\eqref{bxconditin} our results are new
%(cf. \cite{BarColMing, BarColMingStPt16, BarColMingCalc.Var.18, ColMing218, ColMing15}
%where the coefficients $a(x)$ and $b(x)$ are required should be H\"{o}lder continuous,
%not just continuous).

The rest of the paper contains the proofs of Theorems
\ref{thweakHarnck}, \ref{sk.th 3.1}.

\section{Proof of Theorem \ref{thweakHarnck} (the weak Harnack inequality)}

In this section we prove Theorem \ref{thweakHarnck}.
For this we need some inequalities and several lemmas.
First, we note simple analogues of Young's inequality:
\begin{equation}\label{gYoungineq1}
g(x,a)b\leqslant \varepsilon g(x,a)a+g(x,b/\varepsilon)b
\ \ \text{if} \ \varepsilon, a, b >0, \ x\in \Omega,
\end{equation}
\begin{equation}\label{gYoungineq2}
g(x,a)b\leqslant \frac{1}{\varepsilon}\,g(x,a)a
+ \varepsilon^{p-1}g(x,b)b
\ \ \text{if} \ \varepsilon\in (0,1], \ a, b >0, \ x\in \Omega.
\end{equation}
%which are valid for any $\varepsilon\in (0,1)$, $a,b>0$ and for all $x\in \Omega$.
In fact, if $b\leqslant\varepsilon a$, then $g(x,a)b\leqslant \varepsilon g(x,a)a$,
and if $b>\varepsilon a$, then since the function ${\rm v}\rightarrow g(x, {\rm v})$
is increasing we have that $g(x,a)b\leqslant g(x,b/\varepsilon)b$,
which proves inequality \eqref{gYoungineq1}.
Using assumption {\rm (${\rm g}_{2}$)} by similar arguments we arrive at 
inequality \eqref{gYoungineq2}.

Next, we set
\begin{equation}\label{defG}
\mathcal{G}(x, {\rm w}):=\int\limits_{0}^{{\rm w}}g(x,{\rm v})\,d{\rm v}
\ \ \text{for} \ \ {\rm w}>0.
\end{equation}
The following inequalities hold:
\begin{equation}\label{G>gw}
\mathcal{G}(x,{\rm w})\geqslant \gamma\,G(x,{\rm w}) \ \
\text{for all} \ \ x\in\Omega, \  {\rm w}\geqslant 2(1+b_{0}),
\end{equation}
\begin{equation}\label{gw>pG}
G(x,{\rm w})\geqslant p\,\mathcal{G}(x,{\rm w}) \ \
\text{for all} \ \ x\in \Omega, \ {\rm w}\geqslant 0.
\end{equation}
%which are consequences of the assumptions (${\rm g}_{1}$) and (${\rm g}_{2}$),
%respectively.
Indeed, if $x\in\Omega$ and ${\rm w}\geqslant 2(1+b_{0})$ then
by \eqref{gqineq}, \eqref{deffuncgw} and \eqref{defG}, we have
$$
\mathcal{G}(x,{\rm w})= \int\limits_{0}^{{\rm w}}g(x,{\rm v})\,d{\rm v}
\geqslant
\int\limits_{b_{0}}^{{\rm w}}g(x,{\rm v})\,d{\rm v}\geqslant
\frac{g(x,{\rm w})}{c_{1}{\rm w}^{\,q-1}}
\int\limits_{b_{0}}^{{\rm w}} {\rm v}^{\,q-1}d{\rm v}\geqslant
\frac{1-2^{-q}}{c_{1}q}\,G(x,{\rm w}),
$$
which implies \eqref{G>gw}.
Now, let $x\in \Omega$ and ${\rm w}\geqslant0$ be arbitrary,
then by \eqref{gpineq}, \eqref{deffuncgw} and \eqref{defG} we obtain
$$
\mathcal{G}(x,{\rm w})= \int\limits_{0}^{{\rm w}}g(x,{\rm v})\,d{\rm v}
\leqslant
\frac{g(x,{\rm w})}{{\rm w}^{p-1}}\int\limits_{0}^{{\rm w}}
{\rm v}^{p-1}\,d{\rm v}=\frac{1}{p}\,g(x,{\rm w}){\rm w}=\frac{1}{p}\,G(x,{\rm w}),
$$
which yields \eqref{gw>pG}.

The rest of the lemmas in this section are successive stages in the proof of
Theorem \ref{thweakHarnck}.
The proof follows Trudinger's strategy \cite{TrudingerArch71},
which we adapted to Eq. \eqref{gellequation}
under conditions {\rm (${\rm g}$)}, {\rm (${\rm g}_{1}$)}--{\rm (${\rm g}_{3}$)}.
\begin{lemma}\label{ShSkrlem2.1}
Let all the assumptions of Theorem \ref{thweakHarnck} be fulfilled. Then there exists positive
constant $\gamma$ depending only on the known data such that
\begin{equation}\label{ShSkr2.4}
\exp\Bigg(\fint\limits_{B_{2\rho}(x_{0})}
\ln\big(u+2(1+b_{0})\rho\big)\,dx\Bigg)
\leqslant
\Lambda(\gamma,3n,\rho) \big[m(\rho)+2(1+b_{0})\rho\big].
\end{equation}
\end{lemma}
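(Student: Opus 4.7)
The plan is to adapt Trudinger's logarithmic estimate to the generalized Orlicz setting, with careful tracking of the $e^{\lambda(\rho)}$-factors arising from condition $({\rm g}_{3})$. Set $\bar{u}:=u+2(1+b_{0})\rho$ and $K:=m(\rho)+2(1+b_{0})\rho$, so that $\bar{u}\geqslant K$ on $B_{\rho}(x_{0})$. The target is a bound on the geometric mean of $\bar{u}$ over $B_{2\rho}(x_{0})$ by $K$ times $\Lambda(\gamma,3n,\rho)$; equivalently, on $\fint_{B_{2\rho}}\ln(\bar{u}/K)\,dx$ by $\ln \Lambda(\gamma,3n,\rho)$. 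Working with $w:=\ln\bar{u}$, this is a John--Nirenberg-type statement that follows from a suitable logarithmic Caccioppoli inequality.

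First I would test the supersolution identity \eqref{gelintidentity} against $\varphi=\eta^{q}/\bar{u}$, where $\eta\in C_{0}^{\infty}(B_{9\rho/4}(x_{0}))$ satisfies $\eta\equiv 1$ on $B_{2\rho}(x_{0})$ and $|\nabla\eta|\leqslant C/\rho$. Expanding $\nabla\varphi$ and rearranging yields
\begin{equation*}
\int \eta^{q}\,\frac{g(x,|\nabla u|)\,|\nabla u|}{\bar{u}^{\,2}}\,dx
\leqslant q\int \eta^{q-1}\,\frac{g(x,|\nabla u|)\,|\nabla\eta|}{\bar{u}}\,dx,
\end{equation*}
and the Young-type inequalities \eqref{gYoungineq1}--\eqref{gYoungineq2}, applied so that the LHS absorbs an $\varepsilon$-portion of the RHS, leave a remainder controlled by $\int G(x,|\nabla\eta|)/\bar{u}^{\,?}\,dx$. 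Using $|\nabla u|/\bar{u}=|\nabla\ln\bar{u}|$ together with assumptions $({\rm g}_{1})$, $({\rm g}_{2})$, and the elementary inequalities \eqref{G>gw}, \eqref{gw>pG}, the LHS is bounded below by (a constant times) $\int\eta^{q}|\nabla\ln\bar{u}|^{p}\,dx$ up to $\bar{u}$-dependent weights, which reduce to absolute constants after invoking the uniform bound $\bar{u}\leqslant M+2(1+b_{0})\rho$. Freezing $g(x,1/\rho)$ at $x_{0}$ via $({\rm g}_{3})$ then produces
\begin{equation*}
\int_{B_{2\rho}(x_{0})} |\nabla\ln\bar{u}|^{p}\,dx
\leqslant \gamma\,e^{\gamma\lambda(\rho)}\,\rho^{\,n-p}.
\end{equation*}

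Second, I would convert this energy bound into an exponential integrability bound for $\ln\bar{u}$. The Poincar\'e inequality gives $\fint_{B_{2\rho}}|\ln\bar{u}-(\ln\bar{u})_{B_{2\rho}}|^{p}\,dx\leqslant \gamma\,e^{\gamma\lambda(\rho)}$; a John--Nirenberg-type argument (or, equivalently, Moser's iteration on the levels of $\ln\bar{u}$, tuned to extract the double exponential) then upgrades this to
\begin{equation*}
\fint_{B_{2\rho}(x_{0})} \exp\!\big(\sigma\,|\ln\bar{u}-(\ln\bar{u})_{B_{2\rho}}|\big)\,dx
\leqslant \Lambda(\gamma,3n,\rho),
\end{equation*}
with $\sigma$ depending only on the data. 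Restricting to $B_{\rho}(x_{0})$, where $-\ln\bar{u}\leqslant -\ln K$ on a set of comparable measure, the inequality
$\fint_{B_{2\rho}} e^{-\sigma\ln\bar{u}}\,dx \geqslant c\,|B_{\rho}|/|B_{2\rho}|\cdot K^{-\sigma}$ forces $(\ln\bar{u})_{B_{2\rho}}\leqslant \ln K + \sigma^{-1}\ln \Lambda(\gamma,3n,\rho)$; exponentiating yields \eqref{ShSkr2.4}.

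The main obstacle is the first step: in the $x$-dependent Orlicz framework the Young inequalities \eqref{gYoungineq1}--\eqref{gYoungineq2} do not instantly produce a clean $p$-energy bound for $\ln\bar{u}$, because $g(x,|\nabla u|)/\bar{u}^{\,2}$ does not factor as a function of $|\nabla\ln\bar{u}|$. One must use $({\rm g}_{1})$--$({\rm g}_{2})$ to sandwich $g$ between powers on the relevant range $b_{0}\leqslant|\nabla u|\leqslant M/\rho$, and then freeze the resulting coefficients at $x_{0}$ via $({\rm g}_{3})$; the $e^{\lambda(\rho)}$ cost at each freezing step accumulates and is later exponentiated by the Moser/John--Nirenberg iteration, producing exactly the $\Lambda(\gamma,3n,\rho)=\exp(\gamma\exp(3n\lambda(\rho)))$ on the right-hand side of \eqref{ShSkr2.4}.
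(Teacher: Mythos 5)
Your overall strategy (a logarithmic Caccioppoli estimate followed by an exponential-integrability / sup bound for $w=\ln\bar u$) is the right one, and it is indeed the Trudinger-style route the paper follows. However, two of the steps in your sketch have genuine gaps.

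\textbf{(1) The energy estimate cannot be obtained from $\varphi=\eta^{q}/\bar u$.}
Testing with $\varphi=\eta^q/\bar u$ gives the Caccioppoli estimate
$\int G(x,|\nabla u|)\,\bar u^{-2}\eta^q\,dx\leqslant q\int g(x,|\nabla u|)\,\bar u^{-1}|\nabla\eta|\,\eta^{q-1}\,dx$.
To convert the left side into $\int|\nabla\ln\bar u|^{p}\eta^q\,dx$ you must sandwich $g$ via $({\rm g}_1)$--$({\rm g}_2)$, and the cheapest way to do so is to compare $g(x,|\nabla u|)$ with $g(x,\bar u/\rho)$. But then an uncancelled weight $g(x,\bar u/\rho)/\bar u$ survives; since $\bar u/\rho$ ranges over $[2(1+b_0),\,\gamma M/\rho]$ and, by $({\rm g}_1)$, $g(x,\cdot)$ can vary by the factor $c_1(M/\rho)^{q-1}$ over that range, this weight is \emph{not} absorbable into a $\rho$-independent constant. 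This is precisely the difficulty that forces the paper to choose test functions of the form $\varphi=\bar u(w-k)_+\zeta^q/\mathcal G(x_0,\bar u/\rho)$: the $\mathcal G$-denominator, combined with the identity $G/\mathcal G-1\geqslant p-1$ from \eqref{gw>pG}, cancels the offending weight exactly (at the cost of a tracked power of $e^{\lambda(\rho)}$ appearing via $({\rm g}_3)$). The paper never produces an $L^p$ gradient bound at all; it produces the $L^1$ bound
$\int|\nabla w|\zeta^q\,dx\leqslant \gamma\sigma^{-q}\rho^{-1}e^{2\lambda(\rho)}\big(|A_{k,r(1+\sigma)}|+\int(w-k)_+\big)$,
which is enough for a De\,Giorgi level-set iteration.

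\textbf{(2) The final extraction step goes the wrong way.}
You write that on $B_\rho(x_0)$ one has $-\ln\bar u\leqslant-\ln K$, and conclude
$\fint_{B_{2\rho}}e^{-\sigma\ln\bar u}\,dx\geqslant c\,|B_\rho|/|B_{2\rho}|\cdot K^{-\sigma}$.
But $\bar u\geqslant K$ on $B_\rho$ gives $\bar u^{-\sigma}\leqslant K^{-\sigma}$ there, i.e.\ an upper bound, not a lower bound. To obtain the reverse direction you would need $\bar u\leqslant CK$ on a set of measure comparable to $|B_\rho|$; the definition of $\operatorname*{ess\,inf}$ only gives positive measure, not a comparable fraction, and upgrading that statement is itself a Moser iteration on negative powers --- which is effectively the weak Harnack inequality one is trying to prove. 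The paper avoids this circularity entirely: it does a De\,Giorgi iteration directly on the level sets of $w$ to obtain
$\operatorname*{ess\,sup}_{B_\rho}w\leqslant\gamma e^{2n\lambda(\rho)}\big(\fint_{B_{2\rho}}|w|^{n/(n-1)}dx\big)^{(n-1)/n}+1$,
and since $\operatorname*{ess\,sup}_{B_\rho}w=\ln\varkappa-\ln K$, exponentiating gives $\varkappa\leqslant\Lambda(\gamma,3n,\rho)K$ at once, after estimating the right-hand side via Poincar\'e and the auxiliary energy bound \eqref{ShSkrVoit2.8}. If you wish to keep a John--Nirenberg flavor you would need to split into the two one-sided statements separately; the bound used by Lemma \ref{ShSkrlem2.1} is the one-sided sup estimate only, and the two-sided exponential integrability is postponed to Lemma \ref{ShSkrlem2.2}.
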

\begin{proof}
We fix $\sigma\in(0,1)$, for any $\rho\leqslant r<r(1+\sigma)\leqslant2\rho$, we
take a function $\zeta\in C_{0}^{\infty}(B_{r(1+\sigma)}(x_{0}))$,
$0\leqslant\zeta\leqslant1$, $\zeta=1$ in $B_{r}(x_{0})$ and $|\nabla \zeta|\leqslant(\sigma r)^{-1}$.
Let
\begin{equation}\label{deffnctionw}
%w:=\bigg(\ln\frac{L}{ \overline{u} }\bigg)_{+}
%=\max\bigg\{0,\, \ln\frac{L}{\overline{u}}\bigg\},
w:=\ln\frac{\varkappa}{ \overline{u} },
\quad \overline{u}:=u+2(1+b_{0})\rho,
\end{equation}
where the constant $\varkappa$ is defined by the condition
$(w)_{x_{0},2\rho}:=\fint\limits_{B_{2\rho}(x_{0})}w\,dx=0$,
i.e.
\begin{equation}\label{overline{u}}
\varkappa:=\exp \bigg( \fint\limits_{B_{2\rho}(x_{0})}
\ln\overline{u}\,dx \bigg).
\end{equation}
We test \eqref{gelintidentity} by
%$\varphi=\dfrac{\overline{u}\,(w-k)_{+}}
%{\mathcal{G}\left(x_{0}, \overline{u}/\rho \right)}\,\zeta^{q}$,
%where $(w-k)_{+}:=\max\{0,\, w-k\}$, $k>0$
$$
\varphi=\frac{\overline{u}\,(w-k)_{+}}
{\mathcal{G}\left(x_{0}, \overline{u}/\rho \right)}\,\zeta^{q},
\quad (w-k)_{+}:=\max\{0,\, w-k\}, \quad k>0.
$$
Since we are dealing with bounded and non-negative solutions (supersolutions),
then this and all other test functions used in the paper belong to $W^{1,G}_{0}(\Omega)$.
This is a consequence of conditions {\rm (${\rm g}$)}, {\rm (${\rm g}_{1}$)}
and the result of Marcus and Mizel \cite[Theorem~2]{MarcMizel}.
So, we have
\begin{multline*}
\int\limits_{A_{k,r(1+\sigma)}}\frac{G(x,|\nabla u|)}
{\mathcal{G}\left( x_{0}, \overline{u}/\rho \right)}\,\zeta^{\,q}\,dx
+\int\limits_{A_{k,r(1+\sigma)}}\frac{G(x,|\nabla u|)}
{\mathcal{G}\left( x_{0}, \overline{u}/\rho \right)}
\left\{ \frac{G(x_{0}, \overline{u}/\rho)}
{\mathcal{G}(x_{0}, \overline{u}/\rho)}-1 \right\}(w-k)_{+}\,\zeta^{q}\,dx
\\
\leqslant \frac{\gamma}{\sigma}\int\limits_{A_{k,r(1+\sigma)}}
\frac{g(x,|\nabla u|)}
{\mathcal{G}\left( x_{0}, \overline{u}/\rho \right)}\,
\frac{\overline{u}}{\rho}\,(w-k)_{+}\,\zeta^{q-1}\,dx,
\end{multline*}
here $A_{k,r}:=B_{r}(x_{0})\cap \{w>k\}$. %and
%$A_{k,r, r(1+\sigma)}:=\big(B_{r(1+\sigma)}(x_{0})\setminus B_{r}(x_{0})\big)\cap \{w>k\}$.
By \eqref{gw>pG}, the value in curly brackets is estimated from below as follows:
\begin{equation}\label{GmthclGp-1}
\frac{G(x_{0}, \overline{u}/\rho)}
{\mathcal{G}(x_{0}, \overline{u}/\rho)}-1\geqslant p-1,
\end{equation}
and therefore
\begin{multline}\label{Voit1}
\int\limits_{A_{k,r(1+\sigma)}}\frac{G(x,|\nabla u|)}
{\mathcal{G}\left( x_{0}, \overline{u}/\rho \right)}\,\zeta^{q}\,dx+
(p-1) \int\limits_{A_{k,r(1+\sigma)}}\frac{G(x,|\nabla u|)}
{\mathcal{G}\left( x_{0}, \overline{u}/\rho \right)}\,(w-k)_{+}\,\zeta^{q}\,dx
\\
\leqslant \gamma\int\limits_{A_{k,r(1+\sigma)}}
\frac{g(x,|\nabla u|)}
{\mathcal{G}\left( x_{0}, \overline{u}/\rho \right)}\,
\frac{\overline{u}}{\sigma\rho\,\zeta}\,(w-k)_{+}\,\zeta^{q}\,dx.
\end{multline}
We use inequality \eqref{gYoungineq1} with $a=|\nabla u|$,
$b=\dfrac{\overline{u}}{\sigma\rho\,\zeta}$ and
sufficiently small $\varepsilon>0$, and then \eqref{G>gw} with
${\rm w}=\overline{u}/\rho$,
to estimate from above the right-hand side of \eqref{Voit1}:
\begin{multline*}\label{Voit1}
\gamma\int\limits_{A_{k,r(1+\sigma)}}
\frac{g(x,|\nabla u|)}
{\mathcal{G}\left( x_{0}, \overline{u}/\rho \right)}\,
\frac{\overline{u}}{\sigma\rho\,\zeta}\,(w-k)_{+}\,\zeta^{q}\,dx
\\
\leqslant
\frac{p-1}{2\gamma}\int\limits_{A_{k,r(1+\sigma)}}
\frac{G(x,|\nabla u|)}
{\mathcal{G}\left( x_{0}, \overline{u}/\rho \right)}\,
(w-k)_{+}\,\zeta^{q}\,dx
+\frac{\gamma}{\sigma}\int\limits_{A_{k,r(1+\sigma)}}
\frac{g\big( x,  \frac{\gamma\,\overline{u}}{\sigma\rho\,\zeta} \big)}
{g\left( x_{0}, \overline{u}/\rho \right)}(w-k)_{+}\,\zeta^{q-1}\,dx.
\end{multline*}
Combining this inequality and \eqref{Voit1}, %and using \eqref{gw>pG} with ${\rm w}=|\nabla u|$,
we obtain that
\begin{equation}\label{ineqGGgg}
\int\limits_{A_{k,r(1+\sigma)}}
\frac{G(x,|\nabla u|)}{\mathcal{G}\left( x_{0}, \overline{u}/\rho \right)}\,\zeta^{q}\,dx
\leqslant
\frac{\gamma}{\sigma}\int\limits_{A_{k,r(1+\sigma)}}
\frac{g\big( x,  \frac{\gamma\,\overline{u}}{\sigma\rho\,\zeta} \big)}
{g\left( x_{0}, \overline{u}/\rho \right)}\,(w-k)_{+}\,\zeta^{q-1}\,dx.
\end{equation}
Since
$
 \dfrac{\gamma\,\overline{u}}{\sigma\rho\,\zeta}
\geqslant
\dfrac{\overline{u}}{\rho}
\geqslant b_{0}
$ and $|x-x_{0}|<r(1+\sigma)\leqslant 2\rho$ for $x\in A_{k,r(1+\sigma)}$,
then using conditions (${\rm g}_{1}$) and (${\rm g}_{3}$), we get
$$
g\left(x,  \frac{\gamma\,\overline{u}}{\sigma\rho\,\zeta}\right)
\leqslant\gamma\,(\sigma\zeta)^{1-q}\,
g\left(x,\overline{u}/\rho\right)
\leqslant
\gamma\,(\sigma\zeta)^{1-q}\,e^{\lambda(\rho)}
g\left(x_{0},\overline{u}/\rho \right)
\ \ \text{for all} \  x\in  A_{k,r(1+\sigma)}.
$$
So, from \eqref{ineqGGgg} we obtain
\begin{equation}\label{ShSkr2.5}
\int\limits_{A_{k,r(1+\sigma)}}
\frac{G(x,|\nabla u|)}{\mathcal{G}\left( x_{0}, \overline{u}/\rho \right)}\,\zeta^{q}\,dx
\leqslant
\gamma\,\sigma^{-q}\,e^{\lambda(\rho)}
\int\limits_{A_{k,r(1+\sigma)}} (w-k)_{+}\,dx.
\end{equation}

To estimate the term on the left-hand side of \eqref{ShSkr2.5}, we use \eqref{gYoungineq1} with
$\varepsilon\!=\!1$, $a=\overline{u}/\rho$, $b=|\nabla u|$, assumption
(${\rm g}_{3}$), the definitions of the functions $G$, $\mathcal{G}$, $w$
(see equalities \eqref{deffuncgw}, \eqref{defG} and \eqref{deffnctionw}, respectively)
and \eqref{gw>pG}:
\begin{equation}\label{eqShSkr2.6}
\begin{aligned}
\int\limits_{A_{k,r(1+\sigma)}}|\nabla w|\,\zeta^{q}\,dx
&=
\int\limits_{A_{k,r(1+\sigma)}} \frac{|\nabla u|}{\overline{u}}\,
\frac{g\left(x,\overline{u}/\rho\right)}{g\left(x,\overline{u}/\rho\right)}\,\zeta^{q}\,dx
\\
&\leqslant \frac{1}{\rho}\,|A_{k,r(1+\sigma)}|+
\frac{1}{\rho}\int\limits_{A_{k,r(1+\sigma)}}
\frac{G(x,|\nabla u|)}{G\left( x, \overline{u}/\rho \right)}\,\zeta^{q}\,dx
\\
&\leqslant \frac{1}{\rho}\,|A_{k,r(1+\sigma)}|+
\gamma \frac{e^{\lambda(\rho)}}{\rho}
\int\limits_{A_{k,r(1+\sigma)}}
\frac{G(x,|\nabla u|)}{\mathcal{G}\left( x_{0}, \overline{u}/\rho \right)}\,\zeta^{q}\,dx.
\end{aligned}
\end{equation}
Collecting \eqref{ShSkr2.5} and \eqref{eqShSkr2.6}, we obtain
$$
\int\limits_{A_{k,r(1+\sigma)}}|\nabla w|\,\zeta^{q}\,dx\leqslant
\frac{\gamma\,e^{2\lambda(\rho)}}{\sigma^{q}\rho}
\bigg(|A_{k,r(1+\sigma)}|+ \int\limits_{A_{k,r(1+\sigma)}} (w-k)_{+}\,dx\bigg).
$$
From this, using Sobolev embedding theorem and standard iteration arguments
(see, for instance \cite[Section~2, Theorem~5.3]{LadUr}),
and choosing $k$ from the condition
$$
k=\gamma\,e^{2n\lambda(\rho)}
\Bigg(\fint\limits_{B_{2\rho}(x_{0})}
|w|^{\,\frac{n}{n-1}}\,dx\Bigg)^{\frac{n-1}{n}}+1,
$$
we obtain that
\begin{equation}\label{ShSkr2.7}
\esssup\limits_{B_{\rho}(x_{0})}w\leqslant
\gamma\,e^{2n\lambda(\rho)}
\Bigg(\fint\limits_{B_{2\rho}(x_{0})}
|w|^{\,\frac{n}{n-1}}\,dx\Bigg)^{\frac{n-1}{n}}+1.
\end{equation}

To estimate the right-hand side of \eqref{ShSkr2.7} we use the Poincar\'{e} inequality,
by our choice of $\varkappa$ (see \eqref{overline{u}}) we have
\begin{equation}\label{Poincarewn/n-1}
\Bigg(\fint\limits_{B_{2\rho}(x_{0})}
|w|^{\,\frac{n}{n-1}}\,dx\Bigg)^{\frac{n-1}{n}}
=\Bigg(\fint\limits_{B_{2\rho}(x_{0})}
|w-(w)_{x_{0},2\rho}|^{\,\frac{n}{n-1}}\,dx\Bigg)^{\frac{n-1}{n}}
\leqslant \gamma\,\rho^{1-n}
\int\limits_{B_{2\rho}(x_{0})}|\nabla w|\,dx.
\end{equation}
Next, similarly to \eqref{eqShSkr2.6}, we have
\begin{equation}\label{intDwbyintGG}
\int\limits_{B_{2\rho}(x_{0})}|\nabla w|\,dx\leqslant
\int\limits_{B_{4\rho}(x_{0})}|\nabla w|\,\zeta^{q}\,dx
\leqslant \gamma\rho^{n-1}+\gamma\,\frac{e^{\lambda(\rho)}}{\rho}
\int\limits_{B_{4\rho}(x_{0})}
\frac{G(x,|\nabla u|)}
{\mathcal{G}\left( x_{0}, \overline{u}/\rho \right)}\,\zeta^{\,q}\,dx,
\end{equation}
here $\zeta\in C_{0}^{\infty}(B_{4\rho}(x_{0}))$,
$0\leqslant\zeta\leqslant1$, $\zeta=1$ in $B_{2\rho}(x_{0})$, and
$|\nabla\zeta|\leqslant 2/\rho$.
In addition, testing \eqref{gelintidentity} by
$
\varphi=\dfrac{\overline{u}\,\zeta^{q}}{\mathcal{G}\left( x_{0}, \overline{u}/\rho\right)}
$,
similarly to \eqref{ShSkr2.5}, we obtain
\begin{equation}\label{ShSkrVoit2.8}
\int\limits_{B_{4\rho}(x_{0})}
\frac{G(x,|\nabla u|)}{\mathcal{G}\left( x_{0}, \overline{u}/\rho \right)}\,\zeta^{q}\,dx
\leqslant \gamma \rho^{n}e^{\lambda(\rho)}.
\end{equation}
Now, collecting \eqref{ShSkr2.7}--\eqref{ShSkrVoit2.8} and taking into account
\eqref{deffnctionw} and \eqref{overline{u}},
%\eqref{ShSkr2.7}, \eqref{Poincarewn/n-1}, \eqref{intDwbyintGG},
%\eqref{ShSkrVoit2.8}
we arrive at the required \eqref{ShSkr2.4}.
The proof of the lemma is complete.
\end{proof}
%%%%%%%%%%%%%%%%%%%%%%%%%%%%%%%%%%%%%%%%%%%%%%%%%%%%%%%%%%%%%%%%%%%%%%%%%%%%%%%%%%%%%%%%%%%%%%%%%%%%%%%%%%%%%%%%%%%
\begin{lemma}\label{ShSkrlem2.2}
Under the assumptions of Theorem \ref{thweakHarnck} there exists
$\delta_{0}=\delta_{0}(\rho)>0$ depending only on the data and $\rho$ such that
\begin{multline}\label{eqShSkr2.9}
\Bigg(\fint\limits_{B_{3\rho/2}(x_{0})}
\big(u+2(1+b_{0})\rho\big)^{\delta_{0}}dx\Bigg)^{1/\delta_{0}}
\leqslant \Lambda(\gamma,2n-1,\rho)\exp
\Bigg( \fint\limits_{B_{2\rho}(x_{0})}
\ln\big(u+2(1+b_{0})\rho\big)\,dx\Bigg).
\end{multline}
\end{lemma}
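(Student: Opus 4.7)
The plan is to rewrite \eqref{eqShSkr2.9} in terms of the auxiliary function $w := \ln(\varkappa/\overline{u})$ from Lemma \ref{ShSkrlem2.1} (with $\overline u = u+2(1+b_0)\rho$ and $\varkappa = \exp(\fint_{B_{2\rho}(x_0)}\ln\overline u\,dx)$, see \eqref{deffnctionw}, \eqref{overline{u}}), and then to deduce it from an exponential integrability inequality of John--Nirenberg type. Because $\overline u/\varkappa = e^{-w}$ and the exponential on the right-hand side of \eqref{eqShSkr2.9} is precisely $\varkappa$, the claim is equivalent to
$$
\fint_{B_{3\rho/2}(x_0)} e^{-\delta_0 w}\,dx \leqslant \Lambda(\gamma,2n-1,\rho)^{\delta_0}
$$
for a suitable small $\delta_0>0$. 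Since $(w)_{x_0,2\rho}=0$ by construction of $\varkappa$, this is the classical John--Nirenberg setup once a $\mathrm{BMO}$-bound on $w$ is available.

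To establish such a $\mathrm{BMO}$-bound, I would replicate the Caccioppoli chain \eqref{eqShSkr2.6}--\eqref{ShSkrVoit2.8} of Lemma \ref{ShSkrlem2.1} on every sub-ball $B_r(y)\subset B_{2\rho}(x_0)$: test \eqref{gelintidentity} with $\varphi = \overline u\,\zeta^{q}/\mathcal G(x_0,\overline u/\rho)$ for a cut-off $\zeta\in C_0^{\infty}(B_{2r}(y))$ equal to $1$ on $B_r(y)$ with $|\nabla\zeta|\leqslant 2/r$, and combine conditions (${\rm g}_1$)--(${\rm g}_3$) with \eqref{G>gw}, \eqref{gw>pG} and the Young inequalities \eqref{gYoungineq1}, \eqref{gYoungineq2}. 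This should yield
$$
\int_{B_r(y)}|\nabla w|\,dx \leqslant \gamma\,e^{2\lambda(\rho)}\,r^{n-1},
$$
whence the Poincar\'e inequality gives $\|w\|_{\mathrm{BMO}(B_{2\rho}(x_0))}\leqslant \gamma\,e^{2\lambda(\rho)}$.

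Given this, the John--Nirenberg inequality furnishes dimensional constants $c_\ast, C_\ast>0$ such that $\fint_{B_{3\rho/2}(x_0)} e^{\delta\,|w-(w)_{x_0,3\rho/2}|}\,dx \leqslant C_\ast$ whenever $\delta\leqslant c_\ast/\|w\|_{\mathrm{BMO}}$. The difference of means $|(w)_{x_0,3\rho/2}-(w)_{x_0,2\rho}|$ is controlled by $\gamma\,e^{2\lambda(\rho)}$ via the BMO-bound and volume doubling; since $(w)_{x_0,2\rho}=0$, choosing $\delta_0 := c_\ast\gamma^{-1}e^{-2\lambda(\rho)}$ produces $\fint_{B_{3\rho/2}(x_0)} e^{-\delta_0 w}\,dx \leqslant \gamma$. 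Taking the $1/\delta_0$-th root yields the prefactor $\gamma^{1/\delta_0}=\exp(\gamma\,e^{2\lambda(\rho)})$, which fits into the $\Lambda$-template, and unwrapping $e^{-w}=\overline u/\varkappa$ finishes the argument.

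The main obstacle is pinning down the precise exponent $2n-1$ appearing in $\Lambda(\gamma,2n-1,\rho)$: the BMO route sketched above naturally yields only a constant of order $2$ in the inner exponent, from the two $e^{\lambda(\rho)}$ factors arising in \eqref{eqShSkr2.6}--\eqref{ShSkrVoit2.8}. The $n$-dependence most likely enters through a direct Moser-type iteration on negative powers of $\overline u$ in the spirit of \cite{TrudingerArch71} --- where the Sobolev exponent $n/(n-1)$ is invoked at each step, so that the $e^{\lambda(\rho)}$-losses accumulate proportionally to $n$ --- rather than through a single application of John--Nirenberg. The BMO/John--Nirenberg route above is a conceptual shortcut giving the same conclusion with a slightly weaker but structurally identical bound of $\Lambda$-type, and can be upgraded to the sharp $2n-1$ constant by spelling out the iteration and carefully tracking the powers of $e^{\lambda(\rho)}$ at each stage.
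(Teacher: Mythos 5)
The BMO/John--Nirenberg shortcut you propose does not work in the non-log regime that this paper is designed to cover, and the flaw is exactly in the claimed line
\[
\int_{B_r(y)}|\nabla w|\,dx \leqslant \gamma\,e^{2\lambda(\rho)}\,r^{n-1}
\quad\text{for all sub-balls } B_r(y)\subset B_{2\rho}(x_0).
\]
If you keep the normalization $\mathcal G(x_0,\overline u/\rho)$ in the test function while using a cut-off at scale $r\ll\rho$, then the Young-inequality step produces $g\big(x,\gamma\,\overline u/(r\zeta)\big)$ and the appeal to (${\rm g}_1$) to return to the reference level $\overline u/\rho$ forces an extra factor $(\rho/r)^{q-1}$, so what the Caccioppoli chain actually gives is $\int_{B_r(y)}|\nabla w|\,dx\leqslant\gamma\,e^{2\lambda(\rho)}(\rho/r)^{q-1}r^{n-1}$, which is useless as $r\to 0$. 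If instead you rescale the normalization to $\mathcal G(\,\cdot\,,\overline u/r)$ on each sub-ball, the scaling factor disappears but now (${\rm g}_3$) must be invoked at scale $r$ and yields $e^{\lambda(r)}$, not $e^{\lambda(\rho)}$; since $\lambda$ is non-increasing and may be unbounded near $0$, the putative BMO semi-norm of $w$ on $B_{2\rho}(x_0)$ is not controlled by $e^{2\lambda(\rho)}$, and in fact $w$ need not be in BMO with any uniform constant under \eqref{eqSkr1.3}, \eqref{axconditin}, \eqref{bxconditin}. So the John--Nirenberg step cannot be fed a usable bound.

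The paper's proof deliberately avoids this pitfall: every test function used in Lemma~\ref{ShSkrlem2.2} is cut off at a scale between $3\rho/2$ and $2\rho$, comparable to $\rho$, so (${\rm g}_3$) is only ever invoked at the single level $\lambda(\rho)$. The exponential integrability is then obtained by a Moser iteration on powers $v_\mu^{s_j}$ of the truncated logarithm (see \eqref{ShSkr2.10}--\eqref{ShSkr2.14}): each step contributes a factor $e^{2\lambda(\rho)/s_j}$ with $s_j=(n/(n-1))^{j+1}$, and these accumulate to $e^{2(n-1)\lambda(\rho)}$, which together with the starting estimate \eqref{ShSkr2.13} gives the $e^{(2n-1)\lambda(\rho)}$ in \eqref{ShSkr2.14}; this in turn dictates the choice $\delta_0=\tfrac{1}{2\gamma}e^{-(2n-1)\lambda(\rho)}$ and the $\Lambda(\gamma,2n-1,\rho)$ constant. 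Your closing remark that a direct Moser iteration ``in the spirit of Trudinger'' with careful bookkeeping of the $e^{\lambda(\rho)}$ factors would give the stated constant is exactly what the paper does; the BMO route, however, is not a valid shortcut here and cannot be ``upgraded'', because the premise (a $\rho$-uniform BMO bound) is false in the non-log case.
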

\begin{proof}
Let's fix $\sigma\in(0,1)$ and for any $3\rho/2\leqslant r<r(1+\sigma)\leqslant2\rho$
consider the function
$$
\zeta\in C_{0}^{\infty}\big(B_{r(1+\sigma)}(x_{0})\big), \quad
0\leqslant\zeta\leqslant1, \quad \zeta=1 \ \text{in} \ B_{r}(x_{0}),
\quad |\nabla \zeta|\leqslant(\sigma r)^{-1}.
$$
We define the functions
$$
v:=\ln\frac{u+2(1+b_{0})\rho}{\varkappa}=\ln\frac{\overline{u}}{\varkappa}, \quad
v_{\mu}:=\max\{v,\,\mu\}, \quad \mu>0.
$$
Testing \eqref{gelintidentity} by
$$
\varphi=\frac{v_{\mu}^{s-1}\,\overline{u}\,\zeta^{\,l}}
{\mathcal{G}\left( x_{0}, \overline{u}/\rho \right)},
\ \ s\geqslant1, \ \ l\geqslant q,
$$
and using \eqref{GmthclGp-1}, we have
\begin{multline*}
(p-1)\int\limits_{B_{r(1+\sigma)}(x_{0})}
\frac{G(x,|\nabla u|)}{\mathcal{G}\left( x_{0}, \overline{u}/\rho \right)}\,
v_{\mu}^{s-1}\,\zeta^{\,l}\,dx
\\
\leqslant
(s-1)\int\limits_{B_{r(1+\sigma)}(x_{0})\cap\{v>\mu\}}
\frac{G(x,|\nabla u|)}{\mathcal{G}\left( x_{0}, \overline{u}/\rho \right)}\,
v_{\mu}^{s-2}\,\zeta^{\,l}\,dx
+\gamma\, l\int\limits_{B_{r(1+\sigma)}(x_{0})}
\frac{g(x,|\nabla u|)}
{\mathcal{G}\left( x_{0}, \overline{u}/\rho \right)}\,
\frac{\overline{u}}{\sigma\rho\,\zeta}\,
 v_{\mu}^{s-1}\,\zeta^{\,l}\,dx.
\end{multline*}
Choosing $\mu$ from the condition $\dfrac{s}{\mu}=\dfrac{p-1}{2}$
and using inequalities \eqref{gYoungineq1}, \eqref{G>gw} and conditions (${\rm g}_{1}$)
and (${\rm g}_{3}$) similarly to the derivation of \eqref{ShSkr2.5},
from the previous we obtain
\begin{equation}\label{ShSkr2.10}
\int\limits_{B_{r(1+\sigma)}(x_{0})}
\frac{G(x,|\nabla u|)}{\mathcal{G}\left( x_{0}, \overline{u}/\rho \right)}\,
v_{\mu}^{s-1}\,\zeta^{\,l}\,dx
\leqslant \frac{\gamma\,l^{\gamma}e^{\lambda(\rho)}}{\sigma^{q}}
\int\limits_{B_{r(1+\sigma)}(x_{0})} v_{\mu}^{s-1}\,\zeta^{\,l-q}\,dx.
\end{equation}
Estimating the term on the left-hand side of \eqref{ShSkr2.10}, similarly to \eqref{eqShSkr2.6},
we obtain
\begin{multline*}
\int\limits_{B_{r(1+\sigma)}(x_{0})}
|\nabla v_{\mu}|\,v_{\mu}^{s-1}\,\zeta^{\,l}\,dx
\leqslant
\int\limits_{B_{r(1+\sigma)}(x_{0})}
\frac{|\nabla u|}{\overline{u}}\,v_{\mu}^{s-1}\,\zeta^{\,l}\,dx
\\
\leqslant
\frac{\gamma\,l^{\gamma}}{\sigma^{q}}\,\frac{e^{2\lambda(\rho)}}{\rho}
\int\limits_{B_{r(1+\sigma)}(x_{0})} v_{\mu}^{s-1}\,\zeta^{\,l-q}\,dx
\leqslant
\frac{\gamma\,l^{\gamma}}{\sigma^{q}}\,\frac{e^{2\lambda(\rho)}}{\rho}
\int\limits_{B_{r(1+\sigma)}(x_{0})} v_{\mu}^{s}\,\zeta^{\,l-q}\,dx.
\end{multline*}
Using Sobolev's embedding theorem from this we have
\begin{equation}\label{ShSkr2.11}
\fint\limits_{B_{r}(x_{0})}
v_{\mu}^{\frac{sn}{n-1}}dx\leqslant
\Bigg( \frac{\gamma s\,e^{2\lambda(\rho)}}{\sigma^{q}} %\gamma s\,\sigma^{-q}\,e^{2\lambda(\rho)}
\fint\limits_{B_{r(1+\sigma)}(x_{0})}v_{\mu}^{s}\, dx\Bigg)^{\frac{n}{n-1}}.
\end{equation}

For $j=0,1,2,\ldots$, we define the following sequences:
$r_{j}:=\dfrac{\rho}{2}(3+2^{-j})$, $B_{j}:=B_{r_{j}}(x_{0})$,
$$
s_{j}:=\bigg(\frac{n}{n-1}\bigg)^{j+1},\quad
\mu_{j}:=\frac{2s_{j}}{p-1}, \quad
y_{j}:=\fint\limits_{B_{j}}
v_{\mu_{j}}^{s_{j}}\,dx.
$$
Then inequality \eqref{ShSkr2.11} can be rewritten in the form
\begin{equation}\label{ShSkr2.12}
y_{j+1}\leqslant\gamma^{\frac{n}{n-1}}\,2^{\frac{j\gamma n}{n-1}}\,
s_{j}^{\frac{n}{n-1}}\,e^{\frac{2n}{n-1}\,\lambda(\rho)}\,y_{j}^{\frac{n}{n-1}},
\quad j=0,1,2,\ldots\,,
\end{equation}
and by \eqref{ShSkrVoit2.8}
\begin{equation}\label{ShSkr2.13}
y_{0}\leqslant\gamma\,e^{\lambda(\rho)}.
\end{equation}
From this by iteration, we have for $j=0,1,2,\ldots$
\begin{equation}\label{ShSkr2.14}
y_{j+1}^{1/s_{j+1}}
\leqslant\gamma^{\,\sum\limits_{i=0}^{j} \frac{1}{s_{i}}}\,
2^{\gamma\sum\limits_{i=1}^{j}\frac{i}{s_{i}}}
\Big(\frac{n}{n-1}\Big)^{\sum\limits_{i=0}^{j}\frac{i+1}{s_{i}}}
\exp\bigg\{2\lambda(\rho) \sum\limits_{i=0}^{j}\frac{1}{s_{i}} \bigg\}\,
y_{0}^{\frac{n-1}{n}}
\leqslant \gamma\, e^{(2n-1)\lambda(\rho)}.
\end{equation}
%\begin{equation}
%y_{j}^{1/s_{j}}\leqslant\gamma^{1+\sum\limits_{i=0}^{j-1}1/s_{i}}\,2^{\gamma\sum\limits_{i=0}^{j-1}i/s_{i}}
%s_{j}\exp\bigg\{\bigg(1+2\sum\limits_{i=0}^{j-1}1/s_{i}\bigg)\lambda(\rho)\bigg\}
%\leqslant \gamma e^{(2n+1)\lambda(\rho)}s_{j}\,.
%\end{equation}
Let $m\in \mathbb{N}$ be arbitrary, then there exists $j\geqslant 1$ such that
$s_{j-1}<m\leqslant s_{j}$.
Using H\"{o}lder's inequality, %and Stirling's ($m!>\sqrt{2\pi m}\,(m/e)^{m}$)
from \eqref{ShSkr2.14} we obtain
$$
\fint\limits_{B_{3\rho/2}(x_{0})}
\frac{v_{+}^{\,m}}{m!}\,dx
\leqslant
\fint\limits_{B_{3\rho/2}(x_{0})}
\frac{v_{\mu_{j}}^{\,m}}{m!}\,dx
\leqslant
\frac{\gamma\,y_{j}^{m/s_{j}}}{m!}
\leqslant \frac{\gamma^{m+1}}{m!}\,e^{(2n-1)m\lambda(\rho)}
\leqslant \gamma^{m+1}e^{(2n-1)m\lambda(\rho)}.
$$
Choosing $\delta_{0}=\delta_{0}(\rho)$ from the condition
\begin{equation}\label{ShSkr2.15}
\delta_{0}:=\frac{1}{2\gamma}\,e^{-(2n-1)\lambda(\rho)},
\end{equation}
from the previous we have
$$
\fint\limits_{B_{3\rho/2}(x_{0})}
\frac{(\delta_{0}v_{+})^{m}}{m\,!}\,dx\leqslant \gamma\,2^{-m},
$$
which implies that
$$
\fint\limits_{B_{3\rho/2}(x_{0})}
e^{\delta_{0}v}\,dx
\leqslant
\fint\limits_{B_{3\rho/2}(x_{0})}
e^{\delta_{0}v_{+}}\,dx
\leqslant
\sum\limits_{m=0}^{\infty}\,
\fint\limits_{B_{3\rho/2}(x_{0})}
\frac{(\delta_{0}v_{+})^{m}}{m!}\,dx\leqslant 2\gamma.
$$
From this, since
$e^{\delta_{0}v}= (\overline{u}/\varkappa)^{\delta_{0}}$
we have
$$
\Bigg(\fint\limits_{B_{3\rho/2}(x_{0})}
\overline{u}^{\,\delta_{0}}\,dx\Bigg)^{1/\delta_{0}}
\leqslant (2\gamma)^{1/\delta_{0}} \varkappa
\leqslant \Lambda(\gamma, 2n-1, \rho)\,\varkappa,
$$
that together with \eqref{overline{u}} yields the desired inequality \eqref{eqShSkr2.9}.
This completes the proof of the lemma.
\end{proof}

The next lemma is a simple consequence of Lemmas \ref{ShSkrlem2.1}
and \ref{ShSkrlem2.2}.

\begin{lemma}\label{lemShSkrVoit2.3}
Let all the assumptions of Lemma \ref{ShSkrlem2.2} be fulfilled,
and set
\begin{equation}\label{defdelta1}
\delta_{1}:=\delta_{0}/(q-1),
\end{equation}
where $\delta_{0}$ is defined by \eqref{ShSkr2.15}.
Then the following inequality holds:
\begin{multline}\label{ShSkr2.16}
\Bigg(\fint\limits_{B_{3\rho/2}(x_{0})}
g^{\delta_{1}}\left(x_{0}, \frac{u+2(1+b_{0})\rho}{\rho}\right)\,dx\Bigg)
^{1/\delta_{1}}
\leqslant \Lambda(\gamma, 3n, \rho)\,
g\left(x_{0}, \frac{m(\rho)+2(1+b_{0})\rho}{\rho}\right).
\end{multline}
\end{lemma}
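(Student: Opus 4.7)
The plan is to chain Lemmas \ref{ShSkrlem2.1} and \ref{ShSkrlem2.2} and then transfer the resulting $L^{\delta_0}$-bound on $\overline{u}:=u+2(1+b_{0})\rho$ to an $L^{\delta_1}$-bound on $g(x_{0},\overline{u}/\rho)$ by means of the growth condition {\rm (${\rm g}_{1}$)}. First I would simply concatenate the two lemmas: substituting the conclusion of Lemma \ref{ShSkrlem2.1} into the right-hand side of Lemma \ref{ShSkrlem2.2} gives
$$
\Bigg(\fint\limits_{B_{3\rho/2}(x_{0})}\overline{u}^{\,\delta_{0}}\,dx\Bigg)^{1/\delta_{0}}
\leqslant \Lambda(\gamma,3n,\rho)\,\big(m(\rho)+2(1+b_{0})\rho\big),
$$
since the product of two factors of the form $\Lambda(\gamma,\beta,\rho)$ can be absorbed into one such factor (with $\beta=3n$ controlling $2n-1$) at the cost of enlarging the constant $\gamma$.

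Next I would apply condition {\rm (${\rm g}_{1}$)} pointwise. Set $v_{0}:=\bigl(m(\rho)+2(1+b_{0})\rho\bigr)/\rho$; note that $v_{0}\geqslant 2(1+b_{0})\geqslant b_{0}$, and also $\overline{u}/\rho\geqslant 2(1+b_{0})\geqslant b_{0}$. Taking $w:=\max\{\overline{u}/\rho,v_{0}\}$ and $v:=v_{0}$ in \eqref{gqineq}, and using monotonicity of $g(x_{0},\cdot)$, yields
$$
g\bigl(x_{0},\overline{u}/\rho\bigr)\leqslant c_{1}\,g(x_{0},v_{0})\,\max\{1,(\overline{u}/(\rho v_{0}))^{q-1}\}
\leqslant \gamma\,g(x_{0},v_{0})\Bigl(1+\bigl(\overline{u}/(m(\rho)+2(1+b_{0})\rho)\bigr)^{q-1}\Bigr).
$$

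Finally I would raise both sides to the power $\delta_{1}=\delta_{0}/(q-1)$, which is $\leqslant 1$ by the choice \eqref{ShSkr2.15}, use the elementary inequality $(1+t)^{\delta_{1}(q-1)}=(1+t)^{\delta_{0}}\leqslant 2^{\delta_{0}}(1+t^{\delta_{0}})$, and integrate over $B_{3\rho/2}(x_{0})$. Inserting the $\overline{u}^{\delta_{0}}$-bound from the first step gives
$$
\fint\limits_{B_{3\rho/2}(x_{0})}g^{\delta_{1}}(x_{0},\overline{u}/\rho)\,dx
\leqslant \gamma\, g^{\delta_{1}}(x_{0},v_{0})\bigl(1+\Lambda(\gamma,3n,\rho)^{\delta_{0}}\bigr)
\leqslant \gamma\, g^{\delta_{1}}(x_{0},v_{0})\,\Lambda(\gamma,3n,\rho)^{\delta_{0}}.
$$
Raising to the power $1/\delta_{1}$ and using $\delta_{0}/\delta_{1}=q-1$, the factor $\Lambda^{q-1}$ is absorbed into a new $\Lambda(\gamma,3n,\rho)$ after enlarging $\gamma$.

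The only point requiring care is the bookkeeping of the prefactor $\gamma^{1/\delta_{1}}$: since $1/\delta_{1}=(q-1)/\delta_{0}\leqslant\gamma\, e^{(2n-1)\lambda(\rho)}$ by \eqref{ShSkr2.15}, one has $\gamma^{1/\delta_{1}}\leqslant \exp\bigl(\gamma\, e^{(2n-1)\lambda(\rho)}\bigr)=\Lambda(\gamma,2n-1,\rho)\leqslant \Lambda(\gamma,3n,\rho)$, which is exactly the structural form allowed on the right-hand side of \eqref{ShSkr2.16}. No genuine obstacle arises; the result is a bookkeeping exercise around the two preceding lemmas and the almost-$(q-1)$ growth of $g(x_{0},\cdot)$ from $(\textrm{g}_{1})$.
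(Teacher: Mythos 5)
Your proof is correct and follows essentially the same route as the paper's: reduce the $g^{\delta_{1}}$-integral to a $\delta_{0}$-moment of $\overline{u}$ via condition $({\rm g}_{1})$, then plug in Lemmas~\ref{ShSkrlem2.1} and \ref{ShSkrlem2.2} and absorb all $\Lambda$-factors into a single $\Lambda(\gamma,3n,\rho)$. The paper's version is marginally tidier on the $({\rm g}_{1})$ step: instead of writing $g(x_{0},\overline{u}/\rho)\leqslant \gamma\, g(x_{0},v_{0})\bigl(1+(\overline{u}/(\rho v_{0}))^{q-1}\bigr)$ and then managing $(1+\cdot)^{\delta_{1}}$ (for which you invoke $\delta_{1}\leqslant 1$, which in turn requires taking the generic $\gamma$ in \eqref{ShSkr2.15} large enough relative to $q-1$), the paper splits $B_{3\rho/2}(x_{0})$ into $\{u\leqslant m(\rho)\}$, where the ratio $g^{\delta_{1}}(x_{0},\overline{u}/\rho)/g^{\delta_{1}}(x_{0},v_{0})\leqslant 1$ by monotonicity of $g(x_{0},\cdot)$, and $\{u>m(\rho)\}$, where \eqref{gqineq} applies directly with ${\rm w}=\overline{u}/\rho>{\rm v}=v_{0}$, giving cleanly $1+c_{1}^{\delta_{1}}\fint(\overline{u}/(\rho v_{0}))^{\delta_{0}}\,dx$ with no $\max$ or extra constants and no appeal to $\delta_{1}\leqslant 1$. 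Both end with the same absorption of $\Lambda(\gamma,2n-1,\rho)^{q-1}$-type factors into $\Lambda(\gamma,3n,\rho)$, so the difference is purely one of bookkeeping; the set-splitting is the neater presentation.
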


\begin{proof}
By condition (${\rm g}_{1}$) we have
\begin{equation*}
\fint\limits_{B_{3\rho/2}(x_{0})}
\frac{g^{\delta_{1}}\left(x_{0}, \frac{u+2(1+b_{0})\rho}{\rho}\right)}
{g^{\delta_{1}}\left(x_{0}, \frac{m(\rho)+2(1+b_{0})\rho}{\rho}\right)}\,dx
\leqslant 1+ c_{1}^{\delta_{1}}
\fint\limits_{B_{3\rho/2}(x_{0})\cap\{u>m(\rho)\}}
\left(\frac{u+2(1+b_{0})\rho}{m(\rho)+2(1+b_{0})\rho}\right)
^{\delta_{0}}\,dx.
\end{equation*}
By Lemmas \ref{ShSkrlem2.1} and \ref{ShSkrlem2.2} the second term on the right-hand side of
this inequality is estimated from above as follows:
$$
\fint\limits_{B_{3\rho/2}(x_{0})}
\left(\frac{u+2(1+b_{0})\rho}{m(\rho)+2(1+b_{0})\rho}\right)
^{\delta_{0}}\,dx\leqslant \Lambda(\gamma,3n,\rho),
$$
which proves the lemma.
\end{proof}

To complete the proof of Theorem \ref{thweakHarnck} we need the following lemma.
\begin{lemma}[inverse H\"{o}lder inequality]\label{sk.lem 2.4}
Let the assumptions  of Theorem \ref{thweakHarnck} be fulfilled,
then for all $\delta_1\leqslant s<n/(n-1)$ the following inequality holds:
\begin{multline}\label{sk 2.17}
\Bigg(\fint\limits_{B_{5\rho/4}(x_0)} g^s\left(x_0, \frac{u+2(1+b_0)\rho}{\rho}\right) dx
\Bigg)^{1/s}\\
\leqslant
\Lambda(\gamma, 2n, \rho)
%\bigg(1-\frac{n-1}{n}s\bigg)
%^{-\gamma e^{2n\lambda(\rho)}}
\Bigg(\fint\limits_{B_{3\rho/2}(x_0)} g^{\delta_1}
\left(x_0, \frac{u+2(1+b_0)\rho}{\rho}\right) dx \Bigg)^{1/\delta_1}.
\end{multline}
\end{lemma}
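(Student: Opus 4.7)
The plan is to obtain a Caccioppoli-type estimate for positive powers of $g(x_0, \overline{u}/\rho)$, couple it with Sobolev's embedding to get an $L^\tau \to L^{\tau n/(n-1)}$ self-improvement, and then run a \emph{finite} Moser iteration on a chain of shrinking radii between $3\rho/2$ and $5\rho/4$, starting from the exponent $\delta_1$ and stopping as soon as the exponent exceeds $s$ (from which the desired bound follows by Hölder's inequality). Throughout, I keep the same normalization $\overline{u}=u+2(1+b_0)\rho$ used in Lemmas \ref{ShSkrlem2.1}--\ref{ShSkrlem2.2}.

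First I would fix $\sigma\in(0,1)$, $5\rho/4\leqslant r<r(1+\sigma)\leqslant 3\rho/2$, and a cutoff $\zeta\in C_0^\infty(B_{r(1+\sigma)}(x_0))$ with $\zeta\equiv 1$ on $B_r(x_0)$ and $|\nabla\zeta|\leqslant(\sigma r)^{-1}$. For a parameter $\tau\geqslant\delta_1$ to be iterated and $l\geqslant q$, I would test \eqref{gelintidentity} with
\[
\varphi \;=\; \overline{u}\,\frac{[g(x_0,\overline{u}/\rho)]^{\tau(q-1)}}{\mathcal{G}(x_0,\overline{u}/\rho)}\,\zeta^{\,l}.
\]
Differentiating produces a good term of the form $\int G(x,|\nabla u|)[g(x_0,\overline{u}/\rho)]^{\tau(q-1)}\zeta^l/\mathcal{G}(x_0,\overline{u}/\rho)\,dx$ (by \eqref{GmthclGp-1} and (${\rm g}_2$), the factor $\tau(q-1)$ combines with the $p-1$ coming from \eqref{GmthclGp-1} to stay positive), and bad terms that I would absorb by means of Young's inequality \eqref{gYoungineq1}, the lower bound \eqref{G>gw}, the doubling estimate (${\rm g}_1$), and the base-point switch (${\rm g}_3$) in the exact same manner as in the derivation of \eqref{ShSkr2.5}. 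This produces the Caccioppoli-type bound
\[
\int_{B_{r(1+\sigma)}}\!\!\frac{G(x,|\nabla u|)}{\mathcal{G}(x_0,\overline{u}/\rho)}\,[g(x_0,\overline{u}/\rho)]^{\tau(q-1)}\zeta^l\,dx
\leqslant \frac{\gamma\, \tau^{\gamma} e^{\lambda(\rho)}}{\sigma^{q}\rho}\!\int_{B_{r(1+\sigma)}}\!\![g(x_0,\overline{u}/\rho)]^{\tau(q-1)+1}\zeta^{l-q}\,dx.
\]

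Next, repeating the computation \eqref{eqShSkr2.6} with $w$ replaced by $F:=[g(x_0,\overline{u}/\rho)]^\tau$, and using that by (${\rm g}_1$)--(${\rm g}_2$) one has $|\nabla F|\leqslant\gamma\tau\,[g(x_0,\overline{u}/\rho)]^\tau\,|\nabla u|/\overline{u}$, the above Caccioppoli estimate yields
\[
\int_{B_{r(1+\sigma)}} |\nabla F|\,\zeta^l\,dx
\;\leqslant\; \frac{\gamma\,\tau^{\gamma} e^{2\lambda(\rho)}}{\sigma^{q}\rho}\int_{B_{r(1+\sigma)}} F\,\zeta^{l-q}\,dx.
\]
Combined with the Sobolev embedding exactly as in the transition from the Caccioppoli estimate to \eqref{ShSkr2.11}, this gives the self-improving inequality
\[
\bigg(\fint_{B_r}[g(x_0,\overline{u}/\rho)]^{\tau n/(n-1)}\,dx\bigg)^{(n-1)/n}
\leqslant \frac{\gamma\,\tau^{\gamma}e^{2\lambda(\rho)}}{\sigma^{q}}\fint_{B_{r(1+\sigma)}}[g(x_0,\overline{u}/\rho)]^{\tau}\,dx.
\]
Iterating this finitely many times along a geometric sequence of radii $r_j\searrow 5\rho/4$ and exponents $\tau_j=\delta_1(n/(n-1))^j$, so that $\tau_J$ first exceeds $s$, and then using Hölder's inequality to descend from $\tau_J$ to $s$, produces \eqref{sk 2.17} with a prefactor that is the product of finitely many factors $e^{2\lambda(\rho)}$, hence bounded by $\Lambda(\gamma,2n,\rho)$.

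The main obstacle is the Caccioppoli step: unlike the logarithmic test function used in Lemma \ref{ShSkrlem2.2}, here the power $[g(x_0,\overline{u}/\rho)]^{\tau(q-1)}$ is inhomogeneous in $x$ (it is taken at the fixed point $x_0$) while the equation is tested against $g(x,|\nabla u|)$, so Young's inequality \eqref{gYoungineq1} must be applied in such a way that after invoking (${\rm g}_3$) to transfer $g(x,\cdot)$ to $g(x_0,\cdot)$ the resulting right-hand side is exactly a power of $g(x_0,\overline{u}/\rho)$ with the correct exponent $\tau(q-1)+1$, and the $\tau$-dependent constants grow only polynomially so that the Moser iteration converges.
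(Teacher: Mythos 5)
Your strategy runs the Moser iteration in the opposite direction from the paper's: you ascend in the exponent, from $\delta_1$ up past $s$, via \emph{positive} powers of $g(x_0,\overline{u}/\rho)$, whereas the paper introduces $\psi(x_0,{\rm w}):=\mathcal{G}(x_0,{\rm w})/{\rm w}$ (comparable to $g$ by \eqref{g<gammapsi}--\eqref{psi<g}) and descends from $t_0=s$ down to below $\delta_1$ by testing with \emph{negative} powers $\psi^{-\tau}(x_0,\overline{u}/\rho)\zeta^\theta$, $0<\tau<1$. This is not a cosmetic difference: your version has two genuine gaps, and the choice of $\psi^{-\tau}$ is designed precisely to avoid both.

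First, the hypotheses $({\rm g}_1)$--$({\rm g}_3)$ give no \emph{pointwise} bound on $\partial_{\rm v}g$. Your claim $|\nabla F|\leqslant\gamma\tau\,[g(x_0,\overline{u}/\rho)]^\tau|\nabla u|/\overline{u}$ amounts to ${\rm v}\,\partial_{\rm v}g(x_0,{\rm v})\leqslant\gamma\,g(x_0,{\rm v})$, which the ratio condition $({\rm g}_1)$ with $c_1>1$ does not yield: it controls $\log g(x_0,e^s)$ only up to an additive slack $\log c_1$, so the slope is not pointwise bounded, and $g$ is not even assumed differentiable. The same obstruction appears whenever you differentiate $[g(x_0,\overline{u}/\rho)]^{\tau(q-1)}$ inside the test function. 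The paper sidesteps this with the algebraic identity $\psi'_{\rm w}=(g-\psi)/{\rm w}$, which involves no derivative of $g$ at all, combined with the two-sided estimates \eqref{psi'<psi/w}--\eqref{psi'>psi/w}.

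Second, even granting a pointwise bound ${\rm v}\,\partial_{\rm v}g/g\leqslant q-1$, the sign of the ``good term'' breaks down in the exponent range you need. Since $u$ is only a \emph{supersolution}, a useful Caccioppoli estimate comes out of a test function $\varphi=f(\overline{u})\zeta^{l}$ only when $f$ is \emph{non-increasing}. For your choice
$f(\overline{u})=\overline{u}\,[g(x_0,\overline{u}/\rho)]^{\tau(q-1)}/\mathcal{G}(x_0,\overline{u}/\rho)$
one finds, with ${\rm w}=\overline{u}/\rho$,
\[
\frac{{\rm w}\,f'}{f}
=1+\tau(q-1)\,\frac{{\rm w}\,\partial_{\rm w}g}{g}-\frac{G(x_0,{\rm w})}{\mathcal{G}(x_0,{\rm w})}
\leqslant \tau(q-1)^2-(p-1),
\]
using \eqref{gw>pG}. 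This is negative, so that the ``good'' coefficient $(p-1)-\tau(q-1)^2$ is positive, only while $\tau<(p-1)/(q-1)^2$. Your ascending sequence $\tau_j=\delta_1(n/(n-1))^j$ is supposed to overshoot $s$, which may be taken arbitrarily close to $n/(n-1)>1$; for general $p,q$ the iteration therefore must pass through values of $\tau$ at which $f$ is increasing, and then the supersolution inequality gives nothing. Put differently, your scheme would prove the lemma only under an extra restriction like $(q-1)^2<(p-1)(n-1)/n$, which is not among the hypotheses. The paper's descending iteration with $\psi^{-\tau}$, $0<\tau<1$, keeps the test function monotone decreasing at every step, reaches any $s<n/(n-1)$ without further constraints on $p,q$, and closes with H\"older's inequality to raise the exponent from $t_{j+1}\leqslant\delta_1$ to $\delta_1$ (rather than to lower it from $\tau_J$ to $s$ as you propose).
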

\begin{proof}
%To prove (\ref{sk 2.17})
We set
$$
\psi(x, {\rm w}):= \frac{\mathcal{G}(x, {\rm w})}{{\rm w}}  \ \
\text{ for } \ x\in\Omega, \  {\rm w}>0,
$$
and note that by \eqref{G>gw} and  \eqref{gw>pG}, we have
\begin{equation}\label{g<gammapsi}
g(x,{\rm w})\leqslant \gamma\,\psi(x,{\rm w}) \ \
\text{for all} \ \ x\in\Omega, \  {\rm w}\geqslant 2(1+b_{0}),
\end{equation}
\begin{equation}\label{psi<g}
\psi(x, {\rm w})\leqslant p^{-1} g(x, {\rm w}) \ \
\text{for all} \ \ x\in \Omega, \ {\rm w}\geqslant 0,
\end{equation}
which gives
\begin{equation}\label{psi'<psi/w}
\psi'_{{\rm w}} (x, {\rm w})\leqslant \gamma\,
\frac{\psi(x, {\rm w})}{{\rm w}} \ \
\text{for all} \ \ x\in\Omega, \  {\rm w}\geqslant 2(1+b_{0}),
\end{equation}
\begin{equation}\label{psi'>psi/w}
\psi'_{{\rm w}} (x, {\rm w})
=\frac{g(x, {\rm w})-\psi(x, {\rm w})}{{\rm w}}
\geqslant (p-1)\, \frac{\psi(x, {\rm w})}{{\rm w}}
\ \
\text{for all} \ \ x\in \Omega, \ {\rm w}> 0.
\end{equation}

We need a Cacciopoli-type inequality for negative powers of
$\psi\left(x_0, \overline{u}/\rho\right)$.
To establish it, we fix $\sigma\in (0, 1)$ and $r>0$ such that
$5\rho/4 \leqslant r< r(1+\sigma)\leqslant 3\rho/2$, and
take a function
$$
\zeta\in C_0^{\infty}\left(B_{r(1+\sigma)}(x_0)\right), \quad
0\leqslant \zeta \leqslant 1, \quad
\zeta=1 \text{ in } B_{r}(x_0), \quad
|\nabla \zeta|\leqslant (\sigma r)^{-1}.
$$
Testing \eqref{gelintidentity} by
$$
\varphi:= \psi^{-\tau}\left(x_0, \overline{u}/\rho\right)\zeta^{\,\theta},
\ \ 0<\tau<1, \ \ \theta\geqslant q,
$$
and using \eqref{psi'>psi/w}, we obtain
\begin{equation*}
\begin{aligned}
(p-1)\,\tau \int\limits_{B_{r(1+\sigma)}(x_0)}
&\psi^{-\tau}\left(x_0, \overline{u}/\rho\right)
\frac{G(x, |\nabla u|)}{\overline{u}}\,\zeta^{\,\theta}\, dx
\\
\leqslant\frac{\gamma\, \theta }{\sigma\rho}\int\limits_{B_{r(1+\sigma)}(x_0)}
&\psi^{-\tau} \left(x_0, \overline{u}/\rho\right) g(x, |\nabla u|)\,\zeta^{\,\theta-1} dx,
\end{aligned}
\end{equation*}
which implies by \eqref{gYoungineq1}, (${\rm g}_{1}$), (${\rm g}_{3}$) and \eqref{g<gammapsi} that
\begin{multline}\label{sk 2.18}
\int\limits_{B_{r(1+\sigma)}(x_0)}
\psi^{-\tau}\left(x_0,\overline{u}/\rho\right)
\frac{G(x,|\nabla u|)}{\overline{u}}\,\zeta^{\,\theta}\,dx
 \\
 \leqslant
\frac{\gamma\, \theta^{\,q}}{(\sigma \tau)^{q}}\,
\frac{e^{\lambda(\rho)}}{\rho}
\int\limits_{B_{r(1+\sigma)}(x_0)} \psi^{1-\tau}
\left(x_0, \overline{u}/\rho\right) \,\zeta^{\,\theta-q}\, dx.
\end{multline}

Based on the inequality \eqref{sk 2.18}, we organize Moser-type iterations
for the function $\psi\left(x_0, \overline{u}/\rho\right)$.
To do this, we fix $0<t<n/(n-1)$ and $l\geqslant nq/(n-1)$,
then by the Sobolev inequality and by \eqref{psi'<psi/w} and \eqref{psi<g}, we obtain
\begin{equation}\label{sk 2.19}
\begin{aligned}
\Bigg(\int\limits_{B_{r(1+\sigma)}(x_0)}
\psi^{\,t}\left(x_0, \overline{u}/\rho\right)\,\zeta^{\,l}\, dx\Bigg)^{\frac{n-1}{n}}
\leqslant \gamma  \int\limits_{B_{r(1+\sigma)}(x_0)}
\left| \nabla \Big[\psi^{\,\frac{t(n-1)}{n}}\left(x_0, \overline{u}/\rho\right)
\zeta^{\,\frac{l(n-1)}{n}}\Big] \right|\,dx
 \\
 \leqslant\gamma t\int\limits_{B_{r(1+\sigma)}(x_0)}
 \psi^{\,\frac{t(n-1)}{n}-1}\left(x_0, \overline{u}/\rho\right)
 \frac{g\left(x_0, \overline{u}/\rho\right)}
 {\overline{u}}\,|\nabla u|\,\zeta^{\,\frac{l(n-1)}{n}}\, dx
 \\
+\frac{\gamma\, l}{\sigma\rho}\int\limits_{B_{r(1+\sigma)}(x_0)}
\psi^{\,\frac{t(n-1)}{n}}
\left(x_0, \overline{u}/\rho\right)\zeta^{\,\frac{l(n-1)}{n}-1}\, dx.
\end{aligned}
\end{equation}
Using \eqref{gYoungineq1}, (${\rm g}_{3}$), \eqref{g<gammapsi} and (\ref{sk 2.18})
with $\tau=1-t(n-1)/n$ and $\theta=l(n-1)/n$,
we estimate the first term on the right-hand side of (\ref{sk 2.19}) as follows:
\begin{equation}\label{sk 2.20}
\begin{aligned}
&\int\limits_{B_{r(1+\sigma)}(x_0)} \psi^{\,\frac{t(n-1)}{n}-1}
\left(x_0, \overline{u}/\rho\right)
\frac{g\left(x_0, \overline{u}/\rho\right)}{\overline{u}}\,
|\nabla u|\,\zeta^{\,\frac{l(n-1)}{n}} dx
\\
&\leqslant\gamma e^{\lambda(\rho)}\int\limits_{B_{r(1+\sigma)}(x_0)}
\psi^{\,\frac{t(n-1)}{n}-1}\left(x_0, \overline{u}/\rho\right)
\frac{g\left(x,\overline{u}/\rho\right)}{\overline{u}}\,
|\nabla u|\,\zeta^{\,\frac{l(n-1)}{n}} dx
\\
&\leqslant\gamma e^{\lambda(\rho)}\int\limits_{B_{r(1+\sigma)}(x_0)}
\psi^{\,\frac{t(n-1)}{n}-1}\left(x_0, \overline{u}/\rho\right)
\frac{G\left(x, |\nabla u|\right)}{\overline{u}}\,\zeta^{\,\frac{l(n-1)}{n}} dx
\\
&+\gamma\,\frac{e^{\lambda(\rho)}}{\rho} \int\limits_{B_{r(1+\sigma)}(x_0)}
\psi^{\,\frac{t(n-1)}{n}-1}\left(x_0, \overline{u}/\rho\right)
g\left(x, \overline{u}/\rho\right)\,\zeta^{\,\frac{l(n-1)}{n}} dx
\\
&\leqslant\frac{\gamma\,l^{q}}{\sigma^{q}}
\left( 1-\frac{t(n-1)}{n} \right)^{-q}
\frac{e^{2\lambda(\rho)}}{\rho}\int\limits_{B_{r(1+\sigma)}(x_0)} \psi^{\,\frac{t(n-1)}{n}}
\left(x_0, \overline{u}/\rho\right) \,\zeta^{\,\frac{l(n-1)}{n}-q}\, dx.
\end{aligned}
\end{equation}
Combining \eqref{sk 2.19}, \eqref{sk 2.20},  we arrive at
\begin{equation}\label{sk 2.21}
\begin{aligned}
\Bigg(\fint\limits_{B_r}
&\psi^{\,t}\left(x_0, \overline{u}/\rho\right) dx\Bigg)^{\frac{n-1}{n}}
\leqslant\frac{\gamma\,l^{q}}{\sigma^{q}}
\left( 1-\frac{t(n-1)}{n} \right)^{-q}
e^{2\lambda(\rho)}
\\
&\times\fint\limits_{B_{r(1+\sigma)}(x_0)}
\psi^{\,\frac{t(n-1)}{n}} \left(x_0, \overline{u}/\rho\right) dx,
\quad
0<t<\frac{n}{n-1}, \quad l\geqslant \frac{nq}{n-1}.
\end{aligned}
\end{equation}
Now, let $\delta_{1}\leqslant s<n/(n-1)$,
and let $j$ be a non-negative integer number such that
\begin{equation}\label{tj+1<d<tj}
s\left(\frac{n-1}{n}\right)^{j+1} \leqslant\delta_1\leqslant s\left(\frac{n-1}{n}\right)^{j}.
\end{equation}
Setting in \eqref{sk 2.21} $l:=nq$, $r=r_{i}:= \dfrac{\rho}{4}(6-2^{-i})$,
$r(1+\sigma)=r_{i+1}$,
$B_{i}:= B_{r_{i}}(x_0)$ and $t=t_{i}:= s\left(\frac{n-1}{n}\right)^{i}$ for $i=0, 1,\ldots,j+1$,
we have
\begin{equation*}
\Bigg(\fint\limits_{B_{i}}
\psi^{\,t_{i}}\left(x_0, \overline{u}/\rho\right) dx\Bigg)^{\frac{1}{t_{i}}}
\leqslant
\left[\gamma\, 2^{iq}
\left( 1-\frac{n-1}{n}s \right)^{-q}
e^{2\lambda(\rho)} \right]^{\frac{1}{t_{i+1}}}
\Bigg(\fint\limits_{B_{i+1}}
\psi^{\,t_{i+1}}\left(x_0, \overline{u}/\rho\right) dx\Bigg)^{\frac{1}{t_{i+1}}}.
\end{equation*}
Iterating this relation and using H\"{o}lder's inequality, we obtain
\begin{equation*}
\begin{aligned}
\Bigg(&\fint\limits_{B_{5\rho/4}(x_0)} \psi^{\,s}
\left(x_0, \overline{u}/\rho\right) dx\Bigg)^{\frac{1}{s}}
= \Bigg(  \fint\limits_{B_{0}}
\psi^{\,t_{0}}\left(x_0, \overline{u}/\rho\right)dx \Bigg)^{\frac{1}{t_{0}}}
\\
&\leqslant\prod\limits_{i=0}^j\left[ \gamma\, 2^{i\gamma} e^{2\lambda(\rho)}
\bigg(1-\frac{n-1}{n}s \bigg)^{-q}
\right]^{\frac{1}{t_{i+1}}}\Bigg(\fint\limits_{B_{j+1}} \psi^{\,t_{j+1}}
\left(x_0, \overline{u}/\rho\right) dx\Bigg)^{\frac{1}{t_{j+1}}}
\\
&\leqslant
2^{\gamma \sum\limits_{i=0}^{j}i/t_{i+1}}
\left[ \gamma\, e^{2\lambda(\rho)}
\bigg(1-\frac{n-1}{n}s \bigg)^{-q}
\right]^{\sum\limits_{i=0}^{j}1/t_{i+1}}
\Bigg(\gamma \fint\limits_{B_{3\rho/2}(x_0)} \psi^{\,\delta_1}
\left(x_0, \overline{u}/\rho\right) dx\Bigg)^{\frac{1}{\delta_1}},
\end{aligned}
\end{equation*}
and by \eqref{tj+1<d<tj}, \eqref{defdelta1} and \eqref{ShSkr2.15}
$$
\sum\limits_{i=0}^{j}\frac{1}{t_{i+1}}
\leqslant
\frac{1}{\delta_{1}}\, \frac{n}{n-1}
\sum\limits_{i=0}^{\infty} \left(\frac{n-1}{n}\right)^{i}
=\frac{n^{2}}{\delta_{1}(n-1)},
$$
$$
\sum\limits_{i=0}^{j}\frac{i}{t_{i+1}}\leqslant
j \sum\limits_{i=0}^{j}\frac{1}{t_{i+1}}\leqslant
\frac{\gamma (\lambda(\rho)+1)}{\delta_{1}}.
$$
From this, recalling the definition of $\delta_1$
(see again \eqref{defdelta1} and \eqref{ShSkr2.15}),
we arrive at the required \eqref{sk 2.17}.
This completes the proof of the lemma.
\end{proof}

Combining Lemmas \ref{lemShSkrVoit2.3} and \ref{sk.lem 2.4}, we obtain that
\begin{equation*}
\Bigg(\fint\limits_{B_{5\rho/4}(x_0)}
g^{\,s}\left(x_0, \frac{u+2(1+b_0)\rho}{\rho}\right) dx\Bigg)^{\frac{1}{s}}
\leqslant \gamma \Lambda(\gamma, 3n, \rho)\, g\left(x_0, \frac{m(\rho)+2(1+b_0)\rho}{\rho}\right),
\end{equation*}
which proves Theorem \ref{thweakHarnck}.

%%%%%%%%%%%%%%%%%%%%%%%%%%%%%%%%%%%%%%%%%%%%%%%%%%%%%%%%%%%%%%%%%%%%%%%%%%%%%%%%%%%%%%%%%%%%%%%%%%%%%%%%%%%
%%%%%%%%%%%%%%%%%%%%%%%%%%%%%%%%%%%%%%%%%%%%%%%%%%%%%%%%%%%%%%%%%%%%%%%%%%%%%%%%%%%%%%%%%%%%%%%%%%%%%%%%%%
%%%%%%%%%%%%%%%%%%%%%%%%%%%%%%%%%%%%%%%%%%%%%%%%%%%%%%%%%%%%%%%%%%%%%%%%%%%%%%%%%%%%%%%%%%%%%%%%%%%%%%%%%%%%%

\section{ Proof of Theorem \ref{sk.th 3.1} (sub-estimate of solutions)}

In this section we prove Theorem \ref{sk.th 3.1}.
%\begin{theorem}\label{sk.th 3.1}
%Let all the assumptions  of Theorem \ref{sk.th 1.1} be fulfilled, then
%\begin{equation}\label{sk 3.1}
%g\left(x_0, \frac{M(\rho)+2(1+b_0)\rho}{\rho}\right)
%\leqslant
%\gamma\, e^{2n\lambda(\rho)}\fint\limits_{B_{5\rho/4}(x_0)}
%g\left(x_0, \frac{u+2(1+b_0)\rho}{\rho}\right) dx.
%\end{equation}
%\end{theorem}
%\begin{proof}

Let's fix $\sigma$, $\sigma_1 \in (0, 1)$,
$\rho\leqslant r< r(1+\sigma\sigma_1)< r(1+\sigma)\leqslant \dfrac{5}{4}\rho$, and
consider a function $\zeta\in C_0^{\infty}\left(B_{r(1+\sigma\sigma_1)}(x_0)\right)$
such that $0\leqslant \zeta \leqslant 1$, $\zeta=1$ in $B_r(x_0)$ and
$|\nabla \zeta|\leqslant (\sigma \sigma_1 r)^{-1}$.
Testing \eqref{gelintidentity} by
$\varphi=\overline{u}\,\mathcal{G}^{s-1}\left(x_0, \overline{u}/\rho\right)\zeta^{\,l}$,
%where $\overline{u}=u+2(1+b_0)\rho$,
$s\geqslant 1$, $l\geqslant \max\{q, s/2\}$,
%$$
%\varphi=\overline{u}\,\mathcal{G}^{s}
%\left(x_0, \overline{u}/\rho\right)\zeta^l,
%\quad \overline{u}=u+2(1+b_0)\rho,
%\ \ s\geqslant 1, \ \ l\geqslant q,
%$$
and using \eqref{gw>pG}, % and the properties of the function $\zeta$,
we have
\begin{multline}\label{Voit3.1}
s\int\limits_{B_{r(1+\sigma\sigma_1)}(x_0)}
G(x, | \nabla u|)\,\mathcal{G}^{s-1}\left(x_0, \overline{u}/\rho\right) \zeta^{\,l}\, dx
\\
\leqslant
l\int\limits_{B_{r(1+\sigma\sigma_1)}(x_0)}
g(x, | \nabla u|)\,\frac{\overline{u}}{\sigma\sigma_{1}\rho\,\zeta}\,
\mathcal{G}^{s-1}\left(x_0, \overline{u}/\rho\right)\zeta^{\,l}\,dx.
\end{multline}
Using \eqref{gYoungineq1} with $\varepsilon=\dfrac{s}{2l}$,
$a=|\nabla u|$, $b=\dfrac{\overline{u}}{\sigma\sigma_{1}\rho\,\zeta}$,
we estimate the right-hand side of \eqref{Voit3.1} from above as follows:
\begin{multline}\label{Voit3.2}
l\int\limits_{B_{r(1+\sigma\sigma_1)}(x_0)}
g(x, | \nabla u|)\,\frac{\overline{u}}{\sigma\sigma_{1}\rho\,\zeta}\,
\mathcal{G}^{s-1}\left(x_0, \overline{u}/\rho\right)\zeta^{\,l}\,dx
\\
\leqslant
\frac{s}{2}\int\limits_{B_{r(1+\sigma\sigma_1)}(x_0)}
G(x, | \nabla u|)\,\mathcal{G}^{s-1}\left(x_0, \overline{u}/\rho\right) \zeta^{\,l}\, dx
\\
+l\int\limits_{B_{r(1+\sigma\sigma_1)}(x_0)}
g\left(x, \frac{\overline{u}}{\varepsilon\sigma\sigma_{1}\rho\,\zeta} \right)
\frac{\overline{u}}{\sigma\sigma_{1}\rho\,\zeta}\,
\mathcal{G}^{s-1}\left(x_0, \overline{u}/\rho\right)\zeta^{\,l}\,dx,
\end{multline}
moreover, since
$\dfrac{\overline{u}}{\varepsilon\sigma\sigma_{1}\rho\,\zeta}
\geqslant \dfrac{\overline{u}}{\rho}
\geqslant 2(1+b_{0})$,
conditions (${\rm g}_{1}$), (${\rm g}_{3}$),  inequality \eqref{G>gw}
and equality $\varepsilon=\dfrac{s}{2l}$ give the estimate
\begin{multline}\label{Voit3.3}
l\int\limits_{B_{r(1+\sigma\sigma_1)}(x_0)}
g\left(x, \frac{\overline{u}}{\varepsilon\sigma\sigma_{1}\rho\,\zeta} \right)
\frac{\overline{u}}{\sigma\sigma_{1}\rho\,\zeta}\,
\mathcal{G}^{s-1}\left(x_0, \overline{u}/\rho\right)\zeta^{\,l}\,dx
\\
\leqslant \frac{c_{1}l}{\varepsilon^{q-1}}\,
\frac{1}{(\sigma\sigma_{1})^{q}}
\int\limits_{B_{r(1+\sigma\sigma_1)}(x_0)}
g\left(x, \frac{\overline{u}}{\rho}\right)\,\frac{\overline{u}}{\rho}\,
\mathcal{G}^{s-1}\left(x_0, \overline{u}/\rho\right)\zeta^{\,l-q}\,dx
\\
\leqslant \frac{\gamma\,l^{q}e^{\lambda(\rho)}}{(\sigma\sigma_{1})^{q}}
\int\limits_{B_{r(1+\sigma\sigma_1)}(x_0)}
\mathcal{G}^{s}\left(x_0, \overline{u}/\rho\right)\zeta^{\,l-q}\,dx.
\end{multline}
Combining \eqref{Voit3.1}, \eqref{Voit3.2}, \eqref{Voit3.3}, we obtain
$$
s\int\limits_{B_{r(1+\sigma\sigma_1)}(x_0)}
G(x, | \nabla u|)\,\mathcal{G}^{s-1}\left(x_0, \overline{u}/\rho\right) \zeta^{\,l}\, dx
\leqslant
\frac{\gamma\,l^{q}e^{\lambda(\rho)}}{(\sigma\sigma_{1})^{q}}
\int\limits_{B_{r(1+\sigma\sigma_1)}(x_0)}
\mathcal{G}^{s}\left(x_0, \overline{u}/\rho\right)\zeta^{\,l-q}\,dx.
$$
In turn, using this inequality, as well as (${\rm g}_{3}$), \eqref{gYoungineq1}
and \eqref{G>gw}, we deduce that
\begin{equation*}\label{sk 3.2}
\begin{aligned}
&\int\limits_{B_{r(1+\sigma\sigma_1)}(x_0)}
\left|\nabla \big[\mathcal{G}^{s}\left(x_0, \overline{u}/\rho\right) \zeta^{\,l}\big]\right| dx
\\
&\leqslant\frac{s}{\rho}\int\limits_{B_{r(1+\sigma\sigma_1)}(x_0)}
\mathcal{G}^{s-1}\left(x_0, \overline{u}/\rho\right)
g\left(x_0, \overline{u}/\rho\right) |\nabla u|\, \zeta^{l}\, dx
+\frac{l}{\sigma\sigma_{1}\rho}\int\limits_{B_{r(1+\sigma\sigma_1)}(x_0)}
\mathcal{G}^{s}\left(x_0, \overline{u}/\rho\right) \zeta^{\,l-1}\, dx
\\
&\leqslant\gamma s\,\frac{e^{\lambda(\rho)}}{\rho}\!\!\!
\int\limits_{B_{r(1+\sigma\sigma_1)}(x_0)}\!\!\!\!\!
G(x, |\nabla u|)\,\mathcal{G}^{s-1}\left(x_0, \overline{u}/\rho\right)
\zeta^{\,l}\, dx
+\gamma s l\,\frac{e^{2\lambda(\rho)}}{\rho}\!\!\!
\int\limits_{B_{r(1+\sigma\sigma_1)}(x_0)}\!\!\!\!\!
\mathcal{G}^s \left(x_0, \overline{u}/\rho\right)  \zeta^{\,l-1}\, dx
\\
&\leqslant \frac{\gamma s l^{q}}{(\sigma\sigma_1)^{q}}
\frac{e^{2\lambda(\rho)}}{\rho}\int\limits_{B_{r(1+\sigma\sigma_1)}(x_0)}
\mathcal{G}^s \left(x_0, \overline{u}/\rho\right)  \zeta^{\,l-q}\, dx.
\end{aligned}
\end{equation*}
Combining this and Sobolev's inequality, %and (\ref{sk 3.2}),
we obtain
\begin{equation}\label{sk 3.3}
\begin{aligned}
\Bigg(&\int\limits_{B_{r}(x_0)}
\mathcal{G}^{\frac{sn}{n-1}} \left(x_0, \overline{u}/\rho\right)dx
\Bigg)^{\frac{n-1}{n}}
\leqslant
\Bigg(\int\limits_{B_{r(1+\sigma\sigma_1)}(x_0)}
\big[\mathcal{G}^{s} \left(x_0, \overline{u}/\rho\right)
\zeta^{\,l}\,\big]^{\frac{n}{n-1}}\, dx\Bigg)^{\frac{n-1}{n}}
\\
&\leqslant
\int\limits_{B_{r(1+\sigma\sigma_1)}(x_0)}
\left|\nabla \big[\mathcal{G}^{s}\left(x_0, \overline{u}/\rho\right) \zeta^{\,l}\,\big]\right| dx
\leqslant\frac{\gamma s\, l^{q}}{(\sigma\sigma_1)^{q}}
\frac{e^{2\lambda(\rho)}}{\rho}\int\limits_{B_{r(1+\sigma\sigma_1)}(x_0)}
\mathcal{G}^s \left(x_0, \overline{u}/\rho\right)dx.
\end{aligned}
\end{equation}

Now, for $i$, $j=0, 1, 2,\ldots$, we define the sequences
%$r_{i,j}:= \dfrac{\rho}{4}\,(5-2^{-i})+\dfrac{\rho}{8}\,2^{-i-j}$
$$
r_{i,j}:= \frac{\rho}{4}\,(5-2^{-i})+\frac{\rho}{8}\,2^{-i-j}, \quad
s_j:= \left( \frac{n}{n-1}\right)^j, \quad
l_j:=q\left( \frac{n}{n-1}\right)^j.
$$
Let $\zeta_{\,i, j}\in C_0^{\infty}\left(B_{r_{i, j}}(x_0)\right)$,
$0\leqslant \zeta_{\,i, j} \leqslant 1$,
$\zeta_{\,i, j}=1$ in $B_{r_{i, j+1}}(x_0)$,
$|\nabla \zeta_{\,i, j}|\leqslant 2^{\,i+j+4}/\rho$.
For $i$, $j=0, 1, 2,\ldots$,
we also set $r_i:= r_{i, \infty}$,
$M_i:= \esssup\limits_{B_{r_i}(x_0)}u$ and
$$
y_{i, j}:= \Bigg(\fint\limits_{B_{r_{i, j}}(x_0)}
\mathcal{G}^{s_j}\left(x_0, \overline{u}/\rho\right)
 dx\Bigg)^{1/s_j}.
$$
%Substituting in \eqref{sk 3.3} $r=r_{i,j+1}$, $r(1+\sigma\sigma_{1})=r_{i,j}$,
%$s=s_{j}$ and $l=l_{j}$,
From \eqref{sk 3.3} we obtain
\begin{equation}\label{sk 3.4}
y_{i, j+1}\leqslant
\Big(\gamma\, 2^{(i+j)\gamma} e^{2\lambda(\rho)} \Big)^{1/s_{j}} y_{i, j},
\ \ i, j= 0, 1, 2,\ldots\,.
\end{equation}
We iterate inequality (\ref{sk 3.4}) with respect to $j$
and use the fact that $r_{i+1}= r_{i, 0}$ to obtain
\begin{multline*}
\mathcal{G} \left(x_0, \frac{M_i+2(1+b_0)\rho}{\rho}\right)
\leqslant\gamma\,2^{i\gamma}e^{2n\lambda(\rho)}
\fint\limits_{B_{r_{i+1}}(x_0)} \mathcal{G} \left(x_0, \overline{u}/\rho\right)dx
\\
\leqslant\gamma\, 2^{i\gamma}e^{2n\lambda(\rho)}\,\frac{M_{i+1}+2(1+b_0)\rho}{\rho}
\fint\limits_{B_{r_{i+1}}(x_0)} g \left(x_0, \overline{u}/\rho\right)dx.
\end{multline*}
This inequality together with \eqref{gYoungineq2} and \eqref{G>gw} implies that, for any
$\varepsilon\in (0, 1)$ and $i=0, 1, 2,\ldots$,
\begin{equation*}\label{sk 3.5}
\begin{aligned}
g& \left(x_0, \frac{M_i+2(1+b_0)\rho}{\rho}\right)
\\[5pt]
&\leqslant\frac{1}{\varepsilon}\,
g \left(x_0, \frac{M_i+2(1+b_0)\rho}{\rho}\right)\frac{M_i+2(1+b_0)\rho}{M_{i+1}+2(1+b_0)\rho}
+ \varepsilon^{p-1} g \left(x_0, \frac{M_{i+1}+2(1+b_0)\rho}{\rho}\right)
 \\[5pt]
&\leqslant \varepsilon^{p-1} g \left(x_0, \frac{M_{i+1}+2(1+b_0)\rho}{\rho}\right)
+\frac{\gamma\,2^{i\gamma}}{\varepsilon}\,
e^{2n\lambda(\rho)}\fint\limits_{B_{5\rho/4}(x_0)}
g \left(x_0, \overline{u}/\rho\right)dx.
\end{aligned}
\end{equation*}
Iterating the resulting inequality, we get for any $i\geqslant 1$
\begin{equation*}
\begin{aligned}
&g \left(x_0, \frac{M(\rho)+2(1+b_0)\rho}{\rho}\right)
= g \left(x_0, \frac{M_0+2(1+b_0)\rho}{\rho}\right)
\\[5pt]
&\leqslant \varepsilon^{i(p-1)} g \left(x_0, \frac{M_i+2(1+b_0)\rho}{\rho}\right)
+\gamma\,  \varepsilon^{-1} e^{2n\lambda(\rho)}
\sum\limits_{j=0}^{i-1}(\varepsilon^{p-1} 2^{\gamma})^j
\fint\limits_{B_{5\rho/4}(x_0)}
g \left(x_0, \overline{u}/\rho\right)dx.
\end{aligned}
\end{equation*}
Finally, choosing $\varepsilon$  from the condition $\varepsilon^{p-1}2^{\gamma}=1/2$
and passing $i$ to infinity, we arrive at
\begin{equation*}
g \left(x_0, \frac{M(\rho)+2(1+b_0)\rho}{\rho}\right) \leqslant
\gamma  e^{2n\lambda(\rho)}\fint\limits_{B_{5\rho/4}(x_0)}
g \left(x_0, \frac{u+2(1+b_0)\rho}{\rho}\right)dx,
\end{equation*}
which completes the proof of Theorem \ref{sk.th 3.1}.
%\end{proof}

%$\mathcal{B}_{1}^{f}$ classes, Kilpel\"{a}inen-Mal\'{y}, $\textbf{W}_{1,p}=...$
%nonstandard growth, Orlicz classes

\vskip3.5mm
{\bf Acknowledgements.} This paper is supported by Ministry of Education and Science of Ukraine
(project numbers are 0118U003138, 0119U100421) and by the Volkswagen Foundation project
"From Modeling and Analysis to Approximation".

\bigskip

CONTACT INFORMATION

\medskip

Maria A.~Shan\\
Vasyl' Stus Donetsk National University,
600-richcha str. 21, 21021 Vinnytsia, Ukraine\\shan$\underline{\phantom{i}}$maria@ukr.net

\medskip
Igor I.~Skrypnik\\Institute of Applied Mathematics and Mechanics,
National Academy of Sciences of Ukraine, Gen. Batiouk str. 19, 84116 Sloviansk, Ukraine
%Vasyl' Stus Donetsk National University,
%600-richcha Str. 21, 21021 Vinnytsia, Ukraine
\\iskrypnik@iamm.donbass.com

\medskip
Mykhailo V.~Voitovych\\Institute of Applied Mathematics and Mechanics,
National Academy of Sciences of Ukraine, Gen. Batiouk str. 19, 84116 Sloviansk, Ukraine\\voitovichmv76@gmail.com


\begin{thebibliography}{99}

%\bibitem{AcerbiFuscoJDE94}
%E.~Acerbi, N.~Fusco,
%Partial regularity under anisotropic $(p,q)$ growth conditions,
%J. Differential Equations \textbf{107} (1994), no.~1, 46--67.


%\bibitem{AcerbiMingioneArchRat01}
%E.~Acerbi, G.~Mingione,
%Regularity results for a class of functionals with non-standard growth,
% Arch. Ration. Mech. Anal. \textbf{156} (2001), no.~2, 121--140.


%\bibitem{AcerbiMingioneAnSc01}
%E.~Acerbi, G.~Mingione,
%Regularity results for a class of quasiconvex functionals with nonstandard growth,
%Ann. Scuola Norm. Sup. Pisa Cl. Sci. (4), \textbf{30} (2001), no.~2, 311--339.


%\bibitem{AcerbiMingioneArchRat02}
%E.~Acerbi, G.~Mingione,
%Regularity results for stationary electro-rheological fluids,
%Arch. Ration. Mech. Anal. \textbf{164} (2002), no.~3, 213--259.


%\bibitem{AcerbiMingioneJRAngMath05}
%E.~Acerbi, G.~Mingione,
%Gradient estimates for the $p(x)$-Laplacean system,
%J. Reine Angew. Math. \textbf{584} (2005), 117--148.


\bibitem{Alhutov97}
Yu.\,A.~Alkhutov,
The Harnack inequality and the H\"{o}lder property of solutions of nonlinear elliptic equations
with a nonstandard growth condition (Russian),
Differ. Uravn. \textbf{33} (1997), no.~12, 1651--1660;
translation in Differential Equations \textbf{33} (1997), no.~12, 1653--1663 (1998).



\bibitem{AlhutovKrash04}
Yu.\,A.~Alkhutov, O.\,V.~Krasheninnikova,
Continuity at boundary points of solutions of quasilinear
elliptic equations with a nonstandard growth condition (Russian),
Izv. Ross. Akad. Nauk Ser. Mat. \textbf{68} (2004), no.~6, 3--60;
translation in Izv. Math. \textbf{68} (2004), no.~6, 1063--1117.



\bibitem{AlhutovKrash08}
Yu.\,A.~Alkhutov, O.\,V.~Krasheninnikova,
On the continuity of solutions of elliptic equations with a variable order of nonlinearity
(Russian), Tr. Mat. Inst. Steklova \textbf{261} (2008), Differ. Uravn. i Din. Sist., 7--15;
translation in Proc. Steklov Inst. Math. \textbf{261} (2008), no. 1, 1--10.




\bibitem{AlkhSurnApAn19}
Yu.\,A.~Alkhutov, M.\,D.~Surnachev,
A Harnack inequality for a transmission problem with $p(x)$-Laplacian,
Appl. Anal., \textbf{98} (2019), no.~1-2, 332--344.


\bibitem{AlkhSurnTrPetr19}
Yu.\,A.~Alkhutov, M.\,D.~Surnachev,
Harnack's inequality for the $p(x)$-Laplacian with a two-phase exponent $p(x)$.
Translation of Tr. Semin. im. I. G. Petrovskogo No. 32 (2019), 8--56.
J. Math. Sci. (N.Y.) \textbf{244} (2020), no. 2, 116--147.




\bibitem{AlkhSurnAlgAn19}
Yu.\,A.~Alkhutov, M.\,D.~Surnachev,
Behavior at a boundary point of solutions of the Dirichlet problem for the $p(x)$-Laplacian
(Russian), Algebra i Analiz \textbf{31} (2019), no. 2, 88--117;
translation in St. Petersburg Math. J. \textbf{31} (2020), no. 2, 251--271.


\bibitem{AntDiazShm2002_monogr}
S.\,N.~Antontsev, J.\,I.~D\'{\i}az, S.~Shmarev,
Energy Methods for Free Boundary Problems. Applications to Nonlinear PDEs and Fluid Mechanics,
in: Progress in Nonlinear Differential Equations and their Applications, vol. 48,
Birkhauser Boston, Inc., Boston, MA, 2002.


\bibitem{BarColMing}
P.~Baroni, M.~Colombo, G.~Mingione,
Harnack inequalities for double phase functionals,
Nonlinear Anal. \textbf{121} (2015), 206--222.


\bibitem{BarColMingStPt16}
P.~Baroni, M.~Colombo, G.~Mingione,
Non-autonomous functionals, borderline cases and related
function classes,
St. Petersburg Math. J. \textbf{27} (2016), 347--379.



\bibitem{BarColMingCalc.Var.18}
P. Baroni, M. Colombo, G. Mingione,
Regularity for general functionals with double phase, Calc. Var.
Partial Differential Equations \textbf{57} (2018), Paper No. 62, 48 pp.


%\bibitem{BenedMascoloAbsApplAn04}
%I.~Benedetti, E.~Mascolo,
%Regularity of minimizers for nonconvex vectorial integrals with $p$-$q$ growth via relaxation methods,
%Abstr. Appl. Anal. (2004), no.~1, 27--44.


\bibitem{BurSkr_PotAn}
K.\,O.~Buryachenko, I.\,I.~Skrypnik,
Local continuity and Harnack's inequality for double-phase parabolic equations,
submitted.


%\bibitem{ByunOh17}
%S.-S. Byun, J. Oh,
%Global gradient estimates for non-uniformly elliptic equations,
%Calc. Var. Partial Differential Equations \textbf{56} (2017), no. 2, Paper No. 46, 36 pp.



%\bibitem{ByunRyuShin18}
%S.-S. Byun, S. Ryu, P. Shin,
%Calderon-Zygmund estimates for $\omega$-minimizers of double phase variational
%problems, Appl. Math. Letters \textbf{86} (2018), 256--263.


%\bibitem{ChiadoPiatCoscia}
%V.~Chiad\`{o} Piat, A.~Coscia,
%H\"{o}lder continuity of minimizers of functionals with variable growth exponent,
%Manuscripta Math. \textbf{93} (1997), no.~3, 283--299.


\bibitem{ColMing218}
M.~Colombo, G.~Mingione,
Bounded minimisers of double phase variational integrals,
Arch. Rational Mech. Anal.  \textbf{218} (2015), no. 1, 219--273.



\bibitem{ColMing15}
M.~Colombo, G.~Mingione, Regularity for double phase variational problems,
Arch. Rational Mech. Anal.  \textbf{215} (2015), no. 2, 443--496.


\bibitem{ColMingJFnctAn16}
M. Colombo, G. Mingione,
Calderon-Zygmund estimates and non-uniformly elliptic operators,
J. Funct. Anal. \textbf{270} (2016), 1416--1478.



\bibitem{DiBenedettoTrud84}
E.~Di\,Benedetto, N.\,S.~Trudinger,
Harnack inequalities for quasiminima of variational integrals,
 Ann. Inst. H. Poincare Anal. Non Lineaire, \textbf{1} (1984), no.~4, 295--308.


%\bibitem{ElMarcMas16}
%M. Eleuteri, P. Marcellini, E. Mascolo,
%Lipschitz continuity for energy integrals with variable exponents,
%Atti Accad. Naz. Lincei Rend. Lincei Mat. Appl. \textbf{27} (2016), no. 1, 61--87.



%\bibitem{ElMarcMasPuraAppl16}
%M. Eleuteri, P. Marcellini, E. Mascolo,
%Lipschitz estimates for systems with ellipticity conditions at infinity,
%Ann. Mat. Pura Appl. (4) \textbf{195} (2016), no. 5, 1575--1603.



%\bibitem{ElMarcMasAdvCalc17}
%M. Eleuteri, P. Marcellini, E. Mascolo,
%Regularity for scalar integrals without structure conditions,
%Adv. Calc. Var., to appear.
%https://doi.org/10.1515/acv-2017-0037



%\bibitem{GiandiNap13}
%F. Giannetti, A. Passarelli di Napoli,
%Regularity results for a new class of functionals with non-standard growth conditions,
%J. Differential Equations \textbf{254} (2013) 1280--1305.




\bibitem{HarHastOrlicz}
P. Harjulehto, P. H\"{a}st\"{o},
Orlicz Spaces and Generalized Orlicz Spaces, Lecture Notes in Mathematics,
vol. 2236, Springer, Cham, 2019, X+169 pages. DOI: 10.1007/978-3-030-15100-3.


\bibitem{HarHastoZ.Anal19}
P.~Harjulehto, P.~H\"{a}st\"{o},
Boundary regularity under generalized growth conditions,
Z. Anal. Anwend. \textbf{38} (2019), no. 1, 73--96.



\bibitem{HarHastLee2018}
P.~Harjulehto, P.~H\"{a}st\"{o}, M.~Lee,
H\"{o}lder continuity of quasiminimizers and $\omega$-minimizers of functionals with generalized
Orlicz growth,  arXiv:1906.01866v2 [math.AP].


\bibitem{HarHastToiv17}
P.~Harjulehto, P.~H\"{a}st\"{o}, O.~Toivanen,
H\"{o}lder regularity of quasiminimizers under generalized growth conditions,
Calc. Var. Partial Differential Equations \textbf{56} (2017), no. 2, Paper No. 22, 26 pp.



\bibitem{LadUr}
O.\,A.~Ladyzhenskaya, N.\,N.~Ural'tseva,
Linear and quasilinear elliptic equations,
Nauka, Moscow, 1973.



\bibitem{Lieberman91}
G.\,M.~Lieberman,
The natural generalization of the natural conditions of
Ladyzhenskaya and Ural'tseva for elliptic equations,
Comm. Partial Differential Equations \textbf{16} (1991), no. 2-3, 311--361.



\bibitem{Marcellini1989}
P.~Marcellini, Regularity of minimizers of integrals of the calculus of variations with
non standard growth conditions, Arch. Rational Mech. Anal.  \textbf{105} (1989), no.~3, 267--284.



\bibitem{Marcellini1991}
P.~Marcellini, Regularity and existence of solutions of elliptic equations with $p,q$-growth conditions,
J. Differential Equations \textbf{90} (1991), no. 1, 1--30.



\bibitem{MarcMizel}
M.~Marcus, V.\,J.~Mizel,
Functional composition on Sobolev spaces,
Bull. Amer. Math. Soc. \textbf{78} (1972), 38--42.



\bibitem{Mingione2006}
G.~Mingione, Regularity of minima: an invitation to the dark side of the calculus of variations,
Appl. Math. \textbf{51} (2006), no. 4, 355--426.



\bibitem{Moser1961}
J.~Moser,
On Harnack's theorem for elliptic differential equations,
Comm. Pure Appl. Math. \textbf{14} (1961) 577--591.



\bibitem{Ok_AdvNonlAn18}
J.~Ok, Harnack inequality for a class of functionals with non-standard
growth via De\,Giorgi's method, Adv. Nonlinear Anal. \textbf{7} (2018),
no.~2, 167--182.


\bibitem{Ruzicka2000}
M.~R\r{u}\v{z}i\v{c}ka,
Electrorheological fluids: modeling and mathematical theory, in:
Lecture Notes in Mathematics, vol.~1748, Springer-Verlag, Berlin, 2000.


\bibitem{Serrin}
J.~Serrin,
Local behavior of solutions of quasi-linear equations, Acta Math. \textbf{111} (1964) 247--302.


%\bibitem{SkrBur2017}
%I.\,I.~Skrypnik, K.\,O.~Buryachenko,
%Pointwise estimates of solutions to the double-phase elliptic equations,
%J. Math. Sci. (N. Y.) \textbf{222} (2017), No.\,6, 772--786.


%\bibitem{SkrVoitUMB19}
%I.\,I. Skrypnik, M.\,V. Voitovych,
%$\mathfrak{B}_{1}$ classes of De Giorgi, Ladyzhenskaya and Ural'tseva
%and their application to elliptic and parabolic equations with nonstandard growth,
%Ukr. Mat. Visn. \textbf{16} (2019), no. 3, 403--447.


\bibitem{SkrVoitarXiv20}
I.\,I. Skrypnik, M.\,V. Voitovych,
$\mathcal{B}_{1}$ classes of De\,Giorgi-Ladyzhenskaya-Ural'tseva and
their applications to elliptic and parabolic equations with generalized Orlicz
growth conditions, submitted.


\bibitem{SurnPrepr2018}
M.\,D.~Surnachev,
On Harnack's inequality for $p(x)$-Laplacian (Russian),
Keldysh Institute Preprints 10.20948/prepr-2018-69, 69 (2018), 1--32.
%(2018), 069, https://doi.org/10.20948/prepr-2018-69.


\bibitem{TrudingerArch71}
N.\,S.~Trudinger,
On the regularity of generalized solutions of linear, non-uniformly elliptic equations,
Arch. Rational Mech. Anal. \textbf{42} (1971), 50--62.


\bibitem{VoitNA19}
M.\,V. Voitovych, Pointwise estimates of solutions to $2m$-order quasilinear elliptic
equations with $m$-$(p,q)$ growth via Wolff potentials, Nonlinear Anal. \textbf{181} (2019), 147--179.


\bibitem{Weickert}
J.~Weickert,
Anisotropic diffusion in image processing,
European Consortium for Mathematics in Industry, B. G. Teubner, Stuttgart, 1998.



\bibitem{ZhikIzv1983}
V.\,V.~Zhikov, Questions of convergence, duality and averaging for functionals
of the calculus of variations (Russian),
Izv. Akad. Nauk SSSR Ser. Mat. \textbf{47} (1983), no.~5, 961--998.



\bibitem{ZhikIzv1986}
V.\,V.~Zhikov,
Averaging of functionals of the calculus of variations and elasticity theory
(Russian),
Izv. Akad. Nauk SSSR Ser. Mat. \textbf{50}, (1986), no.~4, 675--710, 877.



\bibitem{ZhikJMathPh94}
V.\,V.~Zhikov,
On Lavrentiev's phenomenon,
Russian J. Math. Phys. \textbf{3} (1995), no.~2, 249--269.


\bibitem{ZhikJMathPh9798}
V.\,V.~Zhikov,
On some variational problems,
Russian J. Math. Phys. \textbf{5} (1997), no. 1, 105--116 (1998).



\bibitem{ZhikPOMI04}
V.\,V.~Zhikov,  On the density of smooth functions in Sobolev-Orlicz spaces.
(Russian) Zap. Nauchn. Sem. S.-Peterburg. Otdel. Mat. Inst. Steklov. (POMI) 310 (2004),
Kraev. Zadachi Mat. Fiz. i Smezh. Vopr. Teor. Funkts. 35 [34], 67--81, 226;
translation in J. Math. Sci. (N.Y.) 132 (2006), no. 3, 285--294.



\bibitem{ZhikKozlOlein94}
V.\,V.~Zhikov, S.\,M.~Kozlov, O.\,A.~Oleinik,
Homogenization of differential operators and integral functionals,
Springer-Verlag, Berlin, 1994.

\end{thebibliography}
\end{document}